\def\titlerunning#1{\gdef\titrun{#1}}
\def\author#1{\gdef\autrun{\def\and{\unskip, }#1}\gdef\@author{#1}}
\def\address#1{{\def\and{\\\hspace*{18pt}}\renewcommand{\thefootnote}{}%
\footnote {#1}}%
\markboth{\autrun}{\titrun}}
\def\email#1{e-mail: #1}
\def\subjclass#1{{\renewcommand{\thefootnote}{}%
\footnote{\emph{Mathematics Subject Classification (2010):} #1}}}
\def\Ld{{\mathcal{L}(\mathbb{C}^d,\mathbb{C}^d)}}
\def\P{{\mathbb{P}}}
\def\be{\begin{equation}}
\def\ee{\end{equation}}
\def\ba{{\begin{align}}}
\def\ea{{\end{align}}}
\def\GL{{\mathrm {GL}}}
\def\gl{{\mathfrak{gl}}}
\def\0{{\mathbf 0}}
\def\cal{\mathcal}
\def\SL{{\mathrm {SL}}}
\newtheorem{thm}{Theorem}[section]
\newtheorem{cor}[thm]{Corollary}
\newtheorem{lemma}[thm]{Lemma}
\theoremstyle{remark}
\numberwithin{equation}{section}
\theoremstyle{definition}
\def\note#1
\def\tr{{\text{tr}}}
\newcommand{\dist}{\operatorname{dist}}
\newcommand{\id}{\operatorname{id}}
\newcommand{\DD}{{\cal D}}
\newcommand{\LL}{{\cal L}}
\newcommand{\MM}{{\cal M}}
\newcommand{\OO}{{\cal O}}
\newcommand{\UU}{{\cal U}}
\newcommand{\VV}{{\cal V}}
\newcommand{\C}{{\mathbb C}}
\newcommand{\D}{{\mathbb D}}
\newcommand{\E}{{\mathbb E}}
\newcommand{\N}{{\mathbb N}}
\newcommand{\Q}{{\mathbb Q}}
\newcommand{\R}{{\mathbb R}}
\newcommand{\T}{{\mathbb T}}
\newcommand{\Z}{{\mathbb Z}}
\def\B0{{\bold{0}}}
\def\Empty{}
\newcommand\oplabel[1]{
  \def\OpArg{#1} \ifx \OpArg\Empty {} \else
  	\label{#1}
  \fi}
\newcommand{\comm}[1]{}
\newcommand{\comment}[1]{}
\newcommand{\qtx}[1]{\quad\text{#1}\quad}
\def\Tr{{{\rm Tr}}}
\def\im{{\rm Im}}
\def\re{{\rm Re}}
\DeclareMathOperator{\opCap}{\mathrm{Cap}}
\begin{document}


\baselineskip=17pt


\titlerunning{Complex one-frequency cocycles}

\title{Complex one-frequency cocycles}

\author{Artur Avila \and Svetlana Jitomirskaya \and Christian Sadel}

\date{}

\maketitle

\address{A. Avila: 
CNRS, IMJ-PRG, UMR 7586, Univ Paris Diderot, Sorbonne Paris Cit\'e,
Sorbonnes Universit\'es, UPMC Univ Paris 06, F-75013, Paris, France \&
IMPA, Estrada Dona Castorina 110, Rio de Janeiro, Brasil;
\email{artur.avila@imj-prg.fr}
\and
S. Jitomirskaya: 
University of California, Irvine, 340 Rowland Hall, Irvine
CA 92697-3875,  USA;
 \email{szhitomi@math.uci.edu} \\
 \thanks{The work of S.J. was supported by the NSF Grant DMS 1101578}
\and
C. Sadel: 
University of California, Irvine, 340 Rowland Hall, Irvine
CA 92697-3875,  USA \&
University of British Columbia, 1984 Mathematics Road, Vancouver, BC, \mbox{V6T\,1Z2}, Canada;
 \email{csadel@math.ubc.ca} \\
 \thanks{The work of C.S. was supported by the NSERC Discovery grant 92997-2010 RGPIN}
}

\subjclass{Primary 37D30; Secondary 34D08, 37F99, 37C55}

\begin{abstract}
We show that on a dense open set of analytic one-frequency complex valued
cocycles in
arbitrary dimension Oseledets filtration is either {\it dominated} or
{\it trivial}.  The underlying mechanism is different from that of the
Bochi-Viana Theorem for continuous cocycles, which links non-domination with
discontinuity of the Lyapunov exponent.  Indeed, in our setting
the Lyapunov exponents are shown to depend continuously on the cocycle, even
if the initial irrational frequency is allowed to vary.
On the other hand, this last property provides a good control of the
periodic approximations of a cocycle, allowing us to show
that {\it domination}
can be characterized, in the presence of a gap in the Lyapunov spectrum,
by additional regularity of the dependence of sums of
Lyapunov exponents.
\end{abstract}

\setcounter{tocdepth}{1}


\tableofcontents

\setcounter{footnote}{0}

\section{Introduction}

In dynamical systems and ergodic theory, the fundamental
notion of hyperbolicity appears under many guises, which are usually
split into two broad categories.
Generally speaking, {\it
uniform} notions of hyperbolicity play a major role in the description of
robust behavior.  For instance, the strongest such notion is called simply
{\it uniform hyperbolicity} and is closely associated to
structural stability, while
a weakening of this concept, {\it partial hyperbolicity}, has been
intensively developed in particularly for its connection with stable
ergodicity.
The weakest form of uniform hyperbolicity, sometimes called projective
hyperbolicity, demands merely the presence of a continuous {\it dominated}
decomposition
of the tangent dynamics, and has been linked to robust transitivity as well as
robustness of positive entropy.

On the other hand, {\it nonuniform} notions of hyperbolicity are developed
around the Oseledets Theorem, which provides a decomposition of
the tangent dynamics at almost every point with non-trivial Lyapunov
spectrum.  Of course, such a decomposition is, {\it a priori} only
measurable, and it may depend wildly on parameters, but the
flexibility afforded by getting rid of continuity requirements makes for
much greater potential applicability.  For instance, while there are
manifolds (such as even dimensional spheres) that do not support any
non-trivial continuous decomposition of the tangent
bundle, any manifold supports ergodic
non-uniformly hyperbolic conservative dynamics \cite {DP}.

In his address at the 1983 ICM \cite {M},
Ma\~n\'e suggested that the apparent gap
between uniform and nonuniform notions of hyperbolicity can be bridged in
the case of generic conservative
dynamical systems in the $C^1$-topology.  This program
was eventually developed by Bochi-Viana \cite {BV}, who proved that for
almost every orbit, either all Lyapunov exponents are zero or the
Oseledets splitting is dominated, and hence either
there is no hyperbolicity at all (even nonuniform), or uniform projective
hyperbolicity takes place.  Moreover, those results were also obtained in
the setting of continuous
cocycles over measure-preserving transformations.

In full generality, the Bochi-Viana Theorem is certainly
dependent on low
regularity considerations: For instance, there are open sets of
(sufficiently smooth) ergodic conservative diffeomorphisms for which the
Oseledets splitting
is not dominated.  However, as far as we know, all such examples currently
rely on some underlying uniform form hyperbolicity (see, e.g., \cite {AV},
\cite {ASV}).

It would seem that similar considerations apply
to the case of cocycles over
hyperbolic transformations: Indeed,
non-zero Lyapunov exponents tend to appear
robustly already for H\"older regularity, even in the presence of
topological obstructions to domination \cite {V}.  But we will show in
this paper that Ma\~n\'e's
picture turns out to hold unexpectedly in very large regularity in one
important setup.

\subsection{Bochi-Viana Theorem for analytic one-frequency complex cocycles}

Let $\Ld$ denote the set of linear operators from $\C^d$ to $\C^d$,
i.e. the set of $d\times d$ complex matrices. A complex
one-frequency cocycle is given by a pair $(\alpha,A)$, where
$\alpha\in \R$ is the frequency and $A\in C^0(\R/\Z,\Ld)$
is a continuous function from
$\R/\Z$ to $\Ld$, understood as a map
$(\alpha,A):(x,w) \mapsto (x+\alpha,A(x) \cdot w)$.
The cocycle iterates are given by $(\alpha,A)^n=(n \alpha,A_n)$, where the
$A_n$ are given by
\be
A_n(x)=\prod_{j=n-1}^0 A(x+j\alpha).
\ee
If we want to emphasize the dependence on the frequency,
then we write $A_n(\alpha,x)$.
We will be mostly interested
in the case of irrational frequencies, $\alpha\in\R\setminus\Q$.  In this
case, the dynamics is ergodic and
the Oseledets Theorem provides us with a strictly decreasing sequence of
Lyapunov exponents
$\gamma_j \in [-\infty,\infty)$ of multiplicity
$m_j \in \N$, $1 \leq j \leq
k \leq d$, so that $\sum_{j=1}^k m_j=d$ and for almost every
$x \in \R/\Z$ there
exists a filtration $\C^d=\tilde E^1_x \supset \cdots \supset \tilde E^k_x$
with $\dim
\tilde E^j_x=m_j+\cdots+m_k$, depending measurably on $x$,
that is invariant in the sense that
$A(x) \cdot \tilde E^j_x \subset \tilde E^j_{x+\alpha},\, j=1,\dots,k,$ \footnote{if $A(x)$ is invertible, then
$A(x) \cdot \tilde E^j_x = \tilde E^j_{x+\alpha}$, if $A(x)$ has a kernel, then $\ker A(x)\subset \tilde E^k_x$}
and for every
$w \in \tilde E^j_x \setminus\tilde E^{j+1}_x$
 we have $\limsup_{n \to \infty} \frac {1} {n} \ln \|A_n(x) \cdot
w\|=\gamma_j$. Such a filtration often (always, in the invertible
case) is associated with an invariant\footnote{ In the sense that $A(x) \cdot 
  E^j_x = E^j_{x+\alpha},\, j=1,\dots,k-1,\; A(x) \cdot 
  E^k_x\subset E^k_{x+\alpha}$}  continuous decomposition 
$\C^d=E^1_x \oplus \cdots \oplus E^k_x$ 
with $\dim E^j_x=m_j$ and $\tilde E^j_x = E^j_x \oplus \cdots \oplus E^k_x,$ also depending measurably on $x$, with 
$\limsup_{n \to \infty} \frac {1} {n} \ln \|A_n(x) \cdot w\|=\gamma_j$ for every $w \in E^j_x \setminus\{0\}$ \cite{R}.

An invariant continuous
decomposition $\C^d=E^1_x \oplus \cdots \oplus E^k_x$ is called dominated if there
exists $n \geq 1$ such that for any unit
vectors $w_j \in E^j_x$ we have $\|A_n(x) \cdot w_j\|>\|A_n(x) \cdot
w_{j+1}\|.$  It can be shown that such  a dominated
decomposition is robust, in the sense that small perturbations of the
cocycle will still display a dominated decomposition which will be a
small perturbation of the original one. We will say that a filtration
is dominated if it is associated with a dominated decomposition.

The Bochi-Viana Theorem, specified to this setting, establishes that for
each $\alpha \in \R \setminus \Q$, there exists a residual subset of
$A \in C^0(\R/\Z,\GL(d,\C))$ such that the Oseledets splitting is dominated.  Our
main result shows that even a significantly stronger statement is true in the analytic category.

\def\cV{{\mathcal{V}}}

\begin{thm}\label{main}

Fix $\alpha \in \R \setminus \Q$.  There exists a dense open
subset $\cV \subset C^\omega(\R/\Z,\Ld)$ such that for every $A \in \cV$ the
Oseledets filtration of $(\alpha,A)$ is either trivial\footnote{We do not know whether the set with trivial Oseledets filtration (all Lyapunov exponents are equal) contains 
 an open set or not within the set of analytic complex cocycles.} or 
dominated.

\end{thm}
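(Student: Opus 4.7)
The plan is to deploy Avila's global theory of one-frequency analytic cocycles, extended here to arbitrary dimension, together with the joint continuity of Lyapunov exponents in $(\alpha,A)$ announced in the abstract. The three main ingredients are (i) phase complexification, (ii) convexity with integer quantization of the resulting partial-sum functions, and (iii) joint continuity in $(\alpha,A)$, which transfers information between irrational frequencies and their rational approximants.

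\emph{Step 1 (Complexify and quantize).} Since $A\in C^\omega(\R/\Z,\Ld)$, choose $\delta_0>0$ so $A$ extends holomorphically to $\{|\im z|<\delta_0\}$ and set $A_\epsilon(x):=A(x+i\epsilon)$ for $|\epsilon|<\delta_0$. For $0\leq j\leq d$ define
\[
L^{(j)}(\epsilon):=L_1(\alpha,A_\epsilon)+\cdots+L_j(\alpha,A_\epsilon).
\]
Subharmonicity of $\log\|\Lambda^j A_n(x+i\epsilon)\|$ yields, after averaging in $x$ and letting $n\to\infty$, convexity of $L^{(j)}$ on $(-\delta_0,\delta_0)$. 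An acceleration quantization theorem in dimension $d$---obtained from Avila's result via the exterior-power cocycles $\Lambda^j A$---asserts that the one-sided slopes of $L^{(j)}$ lie in a discrete subgroup of $\R$, so the corners of $L^{(j)}$ form an isolated subset of $(-\delta_0,\delta_0)$.

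\emph{Step 2 (Perturb to a regular cocycle).} Call $A$ \emph{regular} at level $j$ if $L^{(j)}$ is affine in a neighborhood of $\epsilon=0$. For any $A$ the path $\epsilon\mapsto A_\epsilon$ is real-analytic in $C^\omega(\R/\Z,\Ld)$, and by Step 1 the set of $\epsilon$ where $A_\epsilon$ fails to be regular at some level is discrete. Hence one may pick $\epsilon_0$ arbitrarily close to $0$ making $A_{\epsilon_0}$ regular at every level simultaneously, yielding a small perturbation to a fully regular cocycle.

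\emph{Step 3 (Regularity plus gap implies domination).} The heart of the proof is the characterization announced in the abstract: if $A$ is regular at level $j$ and $L_j(A)>L_{j+1}(A)$, then the level-$j$ Oseledets filtration of $(\alpha,A)$ is dominated. Passing to $\Lambda^j A$ reduces this to the case $j=1$, i.e., to producing a continuous invariant line bundle dominating its complement whenever the top Lyapunov exponent is simple and the acceleration of $L_1$ vanishes. Joint continuity of Lyapunov exponents permits rational approximation $\alpha_n=p_n/q_n\to\alpha$; for each $\alpha_n$ the cocycle is periodic and its Lyapunov spectrum reads off the eigenvalues of the monodromy $A_{q_n}(x)$, which depends continuously on $x$. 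Vanishing acceleration, combined with subharmonicity of $\log\|A_{q_n}(x+i\epsilon)\|$, forces uniform separation between the modulus of the top eigenvalue of $A_{q_n}(x)$ and its remaining singular values, uniformly in $x$. This uniform spectral gap passes to the limit $n\to\infty$ as a genuine dominated splitting, which by the Oseledets Theorem must agree with the top Oseledets line.

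\emph{Step 4 (Open dense set).} Define
\[
\cV:=\Bigl\{A\in C^\omega(\R/\Z,\Ld) : \text{for each } j,\ L_j(A)=L_{j+1}(A)\text{ or the level-}j \text{ filtration is dominated}\Bigr\}^\circ,
\]
the interior in the analytic topology. Openness is tautological. For density, given any $A$ the perturbation $A_{\epsilon_0}$ from Step 2 is fully regular, so by Step 3 each gap of its Lyapunov spectrum yields a dominated filtration; robustness of domination (noted in the excerpt) together with the quantization of Step 1 (no new gap can open under small perturbation without simultaneously inheriting a dominated structure from the ambient regularity) places $A_{\epsilon_0}$ in the interior of the defining set, hence in $\cV$.

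The main obstacle is Step 3, where both novel inputs of the paper---joint continuity in $(\alpha,A)$ and acceleration quantization in arbitrary dimension---must work in tandem: the first enables rational approximation so one may reason with eigendata of the monodromy, while the second provides the rigidity needed to upgrade the analytic hypothesis of vanishing acceleration into a geometric dominated splitting.
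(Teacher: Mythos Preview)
Your overall architecture is the paper's: complexify, use quantization to show that imaginary-phase perturbations produce cocycles that are $k$-regular for every $k$ (your Step~2 is the paper's Theorem~1.3), then invoke the equivalence ``$k$-regular and $L_k>L_{k+1}$ $\Longleftrightarrow$ $k$-dominated'' (your Step~3 is the paper's Theorem~1.2, and your sketch of the rational-approximation mechanism is in the right spirit, though the actual proof needs the trace/Fourier argument of Theorem~5.1 and the holomorphic-angle control of Lemma~6.3).

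The gap is in Step~4. You have shown that $A_{\epsilon_0}$ lies in the set
\[
S:=\bigl\{A:\ \text{for each }k,\ L_k(A)=L_{k+1}(A)\ \text{or}\ (\alpha,A)\ \text{is $k$-dominated}\bigr\},
\]
but you have \emph{not} shown that $A_{\epsilon_0}\in S^\circ$. Robustness of domination handles the $k$'s for which $A_{\epsilon_0}$ already has a gap. The problem is the $k$'s with $L_k(A_{\epsilon_0})=L_{k+1}(A_{\epsilon_0})$: for a nearby $A'$ a gap may open at level $k$, and nothing in your argument forces $(\alpha,A')$ to be $k$-dominated. Your parenthetical justification (``no new gap can open under small perturbation without simultaneously inheriting a dominated structure from the ambient regularity'') is not a theorem; $k$-regularity of $A_{\epsilon_0}$ says nothing about $k$-domination when $L_k=L_{k+1}$, and regularity is not an open condition that $A'$ would automatically inherit. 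Quantization of $\omega^k$ does not prevent new gaps from opening either.

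The paper's fix is short but essential: by joint continuity of the Lyapunov exponents (Theorem~1.5), the number of distinct exponents is a lower-semicontinuous integer-valued function of $A$, so the set $U$ where it is locally constant is open and dense. Work inside $U$. There, $S\cap U$ is automatically open (no new gaps can appear, and existing dominations persist), and your Steps~2--3 show $S\cap U$ is dense in $U$. Then $\cV:=S\cap U$ is open and dense in $C^\omega(\R/\Z,\Ld)$. Insert this restriction to $U$ at the start of Step~4 and the argument closes.
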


Here we endow
the space $C^\omega(\R/\Z,\Ld)$ with the usual inductive limit topology.  We
will actually show a somewhat stronger version of this result, namely, where
$C^\omega(\R/\Z,\Ld)$ replaced
by a Banach space $C^\omega_\delta(\R/\Z,\Ld)$ of
analytic functions $A:\R/\Z \to \Ld$ admitting a holomorphic extension to
$\{|\im z|<\delta\}$ which is continuous up to the boundary.

\subsection{Regularity and domination}

The proof of the Bochi-Viana Theorem given in \cite {BV} centers around the
idea that an absence of domination in the Oseledets splitting can
be exploited to ``mix'' different Lyapunov exponents through suitable
perturbations, and hence it leads to discontinuity of the Lyapunov spectrum.
On the other hand, a very general Baire category
reasoning guarantees that
the Lyapunov exponents must be continuous at a {\it generic} cocycle.

At a {\it very} rough level, something similar is taking place in our setting, in
that we do show that some (verified on an open and dense set)
regularity of the dependence of
Lyapunov exponents with respect to parameters implies domination (or
triviality) of the Oseledets splitting.  The actual details are however
completely different, starting with the fact that the regularity property
which is related to domination is not merely continuity of the Lyapunov
exponent.  In fact, it turns out to involve the holomorphic
extension of the cocycle dynamics, and was first introduced (in the
particular case of $\SL(2,\C)$-cocycles)
by Avila in \cite {Avi}.

Let $L_1(\alpha,A)
\geq ... \geq L_d(\alpha,A)$ be the Lyapunov exponents of
$(\alpha,A)$ repeated according to their multiplicity, i.e.,
\be
L_k(\alpha,A) =
\lim_{n\to\infty} \frac1n \int_{\R/\Z} \ln(\sigma_k(A_n(\alpha,x))) dx,
\ee
where for a matrix $B\in\Ld$ we denote by
$\sigma_1(B)\geq\ldots\geq\sigma_d(B)$ its singular values
(eigenvalues of $\sqrt{B^*B}$).  Since the $k$-th exterior product
$\Lambda^k B$ of $B$ satisfies
$\prod_{j=1}^k \sigma_j(B)=\sigma_1(\Lambda^k B)=\|\Lambda^k B\|$,
$L^k(\alpha,A)=\sum_{j=1}^k
L_j(\alpha,A)$ satisfies
\be
L^k(\alpha,A)=L_1(\alpha,\Lambda^k A)=\lim_{n \to \infty} \frac {1} {n}
\int_{\R/\Z} \ln \|\Lambda^k A_n(\alpha,x)\| dx.
\ee
By analyticity, one can extend the function $A(x)$ to a strip
$|\im\,x|<\delta$ in the complex plane.
Then, by subharmonicity and constancy in $\re\, x$,
$L^k(\alpha,A(\cdot+it))=L_1(\alpha,\Lambda^k A(\cdot+it))$ is a
{\it convex} function of $t\in(-\delta,\delta)$\footnote{This can be
  viewed as a corollary of Hadamard's three-circle theorem} unless it is 
identically\footnote{Convexity implies that right-derivatives exist and the graph lies above the tangent line.
Hence, $L^k(\alpha,A(\cdot+it))$ is either always or never $-\infty$.}
equal to $-\infty$.  We say that $(\alpha,A)$ is
{\bf $k$-regular} if $t \mapsto
L^k(\alpha,A(\cdot+it))$ is an {\it affine} function of $t$
in a neighborhood of $0$.

Let us say that $(\alpha,A)$ is {\bf $k$-dominated} (for some $1 \leq k \leq d-1$)
if there exists a dominated
decomposition $\C^d=E^+ \oplus E^-$ with $\dim E^+=k$.
If $\alpha \in \R \setminus \Q$, then it follows from the definitions
that the
Oseledets splitting is dominated if and only if
$(\alpha,A)$ is $k$-dominated for each $k$ such that
$L_k(\alpha,A)>L_{k+1}(\alpha,A)$.

The next two results give the basic relation
between regularity and domination and show that regularity is fairly
frequent.

\begin{thm}
\label{thm-i}
Let $\alpha \in \R \setminus \Q$, $A \in C^\omega(\R/\Z,\Ld)$.  If $1
\leq k \leq d-1$ is such that $L_k(\alpha,A)>L_{k+1}(\alpha,A)$ then
$(\alpha,A)$ is $k$-regular if and only if $(\alpha,A)$ is
$k$-dominated.

\end{thm}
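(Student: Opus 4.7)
\smallskip

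\noindent\textbf{Proof proposal.} The plan is to reduce the statement to the case $k=1$ by passing to the exterior power cocycle, then handle the two directions separately, with the main work going into producing a holomorphic invariant line bundle from the regularity hypothesis.

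\smallskip

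\noindent\emph{Reduction to $k=1$.} I would first replace $(\alpha,A)$ by $(\alpha,\Lambda^k A)$, an analytic one-frequency cocycle in dimension $\binom{d}{k}$. By the identity $L^k(\alpha,A)=L_1(\alpha,\Lambda^k A)$, the holomorphic extension $t\mapsto L^k(\alpha,A(\cdot+it))$ agrees with $t\mapsto L_1(\alpha,(\Lambda^k A)(\cdot+it))$, so $k$-regularity of $(\alpha,A)$ is literally $1$-regularity of $(\alpha,\Lambda^k A)$. Under the gap assumption $L_k(\alpha,A)>L_{k+1}(\alpha,A)$, the top exponent of $\Lambda^k A$ is strictly larger than the next one, and $k$-domination of $(\alpha,A)$ is equivalent to $1$-domination of $(\alpha,\Lambda^k A)$: one direction is immediate by taking $\Lambda^k E^+$ as the dominated line, and the reverse uses the fact that an invariant line with maximal growth must coincide with the measurable Oseledets line $\Lambda^k E^+_x$ from $(\alpha,A)$, which is decomposable. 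So from now on I assume $k=1$ and $L_1>L_2$.

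\smallskip

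\noindent\emph{Dominated $\Rightarrow$ regular.} Assume a continuous dominated splitting $\C^d=E^+\oplus E^-$ with $\dim E^+=1$. Since domination is an open and persistent condition, the splitting extends to a holomorphic splitting on some strip $\{|\im z|<\delta'\}$. I choose a holomorphic nowhere-vanishing section $v(z)$ of $E^+$ on the universal cover (possibly with a multiplicative monodromy $v(z+1)=c\,v(z)$; this affects only additive constants) and write $A(z)v(z)=\lambda(z)\,v(z+\alpha)$ for a holomorphic $\lambda$. Because $E^+$ is the most-expanded direction in the dominated splitting, $A(z)v(z)$ is a nonzero vector in $E^+_{z+\alpha}$ throughout the strip, so $\lambda$ is non-vanishing there. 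Consequently $\ln|\lambda(z)|$ is harmonic, $1$-periodic in $\re z$, and
\[
L^1(\alpha,A(\cdot+it))=\int_{\R/\Z}\ln|\lambda(x+it)|\,dx
\]
has vanishing second derivative in $t$ (the $x$-average of a harmonic, $x$-periodic function is affine in $t$). This gives $1$-regularity.

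\smallskip

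\noindent\emph{Regular $\Rightarrow$ dominated (main obstacle).} Assume $t\mapsto L^1(\alpha,A(\cdot+it))$ is affine on $(-\delta',\delta')$. The plan is to build a holomorphic $A$-invariant line bundle on a strip, which by the gap $L_1>L_2$ must coincide with the Oseledets top direction for real $x$ and hence give a dominated splitting. To construct it, I would work with the subharmonic functions
\[
\psi_n(z)=\tfrac1n\ln\sigma_1(A_n(z)),
\]
observing that each $\Psi_n(t):=\int_{\R/\Z}\psi_n(x+it)\,dx$ is convex in $t$ and converges (say in $L^1_{\loc}$) to $L^1(\alpha,A(\cdot+it))$. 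Affineness of the limit forces the Riesz masses $\Delta\Psi_n$ to go to zero on the strip, which means that the zero sets of the holomorphic minors controlling the top singular value of $A_n(z)$ asymptotically avoid the strip. A normal-family argument applied to the top left singular direction $u_n(z)\in\P^{d-1}$ of $A_n(z)$ then produces a limit $u(z)$ that is holomorphic on a strip, $A$-invariant, and agrees a.e.\ with the measurable top Oseledets direction on the real line. The gap $L_1>L_2$ ensures that this continuous invariant line in fact dominates its complement, finishing the proof.

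\smallskip

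\noindent The hardest step is the last one: converting affineness of the $x$-averaged $L^1$ into a genuinely holomorphic invariant line bundle. The technical content is the subharmonic-analytic passage from the vanishing of the $t$-curvature of $\Psi_n$ to uniform control, in the strip, of the location of near-degeneracies of the top singular value of $A_n$, which is exactly what powers the normal-family extraction of the holomorphic invariant direction.
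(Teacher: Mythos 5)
Your reduction to $k=1$ and your treatment of the \lq\lq dominated $\Rightarrow$ regular\rq\rq\ direction are sound and close in spirit to the paper (you observe directly that $\int_0^1 \ln|\lambda(x+it)|\,dx$ is affine because $\ln|\lambda|$ is harmonic and $x$-periodic; the paper packages the same observation as the statement that $\omega^1$ is the winding number of $\lambda$ and hence locally constant).

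The \lq\lq regular $\Rightarrow$ dominated\rq\rq\ direction, however, has a genuine gap. Your plan is to extract a holomorphic invariant line by running a normal-family argument on the top \emph{singular} direction $u_n(z)$ of $A_n(z)$. But $u_n(z)$ is the leading eigendirection of $A_n(z)^*A_n(z)$, and the adjoint is anti-holomorphic in $z$; so $z\mapsto u_n(z)$ is \emph{not} holomorphic, and there are no \lq\lq holomorphic minors controlling the top singular value\rq\rq\ whose zeros you can avoid. Montel's theorem therefore does not apply to this family, and the proposed limit $u(z)$ need not be holomorphic. The preliminary step (affineness of $\lim\Psi_n$ forces the Riesz masses $\Delta\Psi_n\to 0$) is also only a weak-$*$ statement about measures and, even if made precise, does not by itself produce the pointwise/uniform spectral-gap control you need in the strip.

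The paper sidesteps the non-holomorphy of singular directions by passing to \emph{rational} frequency approximants $p_n/q_n$. For a rational frequency the cocycle is determined by the single matrix $A_{q_n}(z)$, so the analogue of your $u_n$ is the \emph{dominant generalized eigenspace} of $A_{q_n}(z)$, which \emph{is} a holomorphic function of $z$ once there is a spectral gap. The regularity hypothesis is what produces that spectral gap uniformly over a strip: via the trace trick in Theorem~\ref{thm-approx}, convexity of $\phi_n(t)=\max_x \frac{1}{k_nq_n}\ln|\tr A_{q_n}(p_n/q_n,x+it)^{k_n}|$ combined with $\phi_n(0)\geq\Phi(0)+o(1)$ and $\phi_n\leq\Phi+o(1)$ pins $\phi_n$ to the affine $\Phi$, and Fourier truncation of the degree-$\leq m_0q_n$ trigonometric polynomial $\tr A_{q_n}^{k_n}$ then gives a \emph{lower} bound $\frac1{q_n}\ln\rho(A_{q_n}(p_n/q_n,z))\geq\Phi(t)+o(1)$ on the whole strip. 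Together with the upper bound of Lemma~\ref{lem-approx} this gives a uniform spectral gap for $A_{q_n}(z)$, hence uniform $1$-domination of the rational cocycles across the strip, holomorphic stable/unstable sections by Lemma~\ref{lem-holomorph}, a uniform lower bound on their angle by the maximum principle (Lemma~\ref{lem-uniform}), and only then a Montel extraction of a genuinely holomorphic limit splitting. Without the detour through rational approximants and the trace/Fourier mechanism, I do not see how to repair the normal-family step in your sketch.
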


\begin{thm}
\label{thm-ii}
Let $\alpha \in \R \setminus \Q$, $A \in C^\omega(\R/\Z,\Ld)$ and $L_k(\alpha,A)>-\infty$.  
Then for every $t \neq 0$ small enough, $(\alpha,A(\cdot+it))$ is $k$-regular.

\end{thm}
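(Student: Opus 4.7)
Set $\varphi(t) := L^k(\alpha,A(\cdot+it))$. Since $A$ is holomorphic on a strip $\{|\im z|<\delta\}$ and $\varphi(0) = L^k(\alpha,A) > -\infty$, $\varphi$ is a finite convex function on $(-\delta,\delta)$; its right-derivative $\varphi'_+$ therefore exists everywhere and is non-decreasing. The goal reduces to showing $\varphi$ is affine on each of $(-\eps,0)$ and $(0,\eps)$ for some $\eps>0$, equivalently that $\varphi'_+$ is locally constant there.

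The plan is to establish a \emph{quantization of the right-derivative}: there exists a positive integer $N = N(d)$ such that $\varphi'_+(t)\in\tfrac{1}{N}\Z$ for every $t\in(-\delta,\delta)$. Granted this, $\varphi'_+$ takes values in the discrete set $\tfrac{1}{N}\Z$ and, being bounded on any compact subinterval of $(-\delta,\delta)$, is a step function there with only finitely many jumps. Hence $\varphi$ is piecewise linear in a neighborhood of $0$ with finitely many breakpoints, and one may choose $\eps > 0$ so that $(-\eps,\eps)\setminus\{0\}$ contains no breakpoint, which is the theorem.

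To prove the quantization I would approximate $\alpha$ by rationals $p_n/q_n\to\alpha$ and work with $\varphi_n(t):=L^k(p_n/q_n,A(\cdot+it))$. For a rational frequency $p/q$ the dynamics becomes periodic, $A_{nq}(p/q,x)=A_q(p/q,x)^n$, and by Gelfand's formula $L^k(p/q,A(\cdot+it))=\tfrac{1}{q}\int_{\R/\Z}\ln\rho(\Lambda^k A_q(p/q,x+it))\,dx$, where $\rho$ denotes the spectral radius. A Jensen-type analysis of the characteristic polynomial of the entire matrix-valued function $z\mapsto\Lambda^k A_q(p/q,z)$ then exhibits $\varphi_n$ as an explicit convex piecewise-linear function of $t$, whose slopes, after the proper normalization, lie in $\tfrac{1}{N}\Z$ with $N$ depending only on the ambient dimension (and not on $q$). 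The joint continuity of Lyapunov exponents in $(\alpha,t)$, a central result of the paper, then yields uniform convergence $\varphi_n\to\varphi$ on compact subintervals of $(-\delta,\delta)$; a standard convex-analysis argument (choosing $t_n\to t_0$ at which $\varphi_n$ is differentiable and $\varphi_n'(t_n)\to\varphi'_+(t_0)$) combined with closedness of $\tfrac{1}{N}\Z$ transfers the quantization to $\varphi$.

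\textbf{Main obstacle.} The delicate step is proving the quantization with a \emph{uniform} denominator $N$: the Jensen-type analysis must be carried out for eigenvalues of the $k$-th exterior product of a holomorphic matrix, not merely for its determinant, and the normalization must be arranged so that the slope denominators stay bounded independently of the period $q$ of the rational approximant. Equally indispensable is the joint continuity of $L^k$ in $(\alpha,t)$, itself a main result of the paper, without which the quantization could not be transferred from the approximants $\varphi_n$ to $\varphi$.
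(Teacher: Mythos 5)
The reduction you set up — establish quantization of the right derivative (acceleration), conclude that $t\mapsto L^k(\alpha,A(\cdot+it))$ is piecewise affine near $0$, then discard the finitely many breakpoints — is exactly the route the paper takes (Theorem~\ref{thm-ii} is deduced there from Theorem~\ref{thm-iii} in one line). The gap is in how you propose to prove the quantization itself.

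Your plan is to pass to rational approximants $p_n/q_n$, express $\varphi_n(t)=L^k(p_n/q_n,A(\cdot+it))=\tfrac1{q_n}\int\ln\rho(\Lambda^k A_{q_n}(x+it))\,dx$, and argue by a Jensen-type analysis that each $\varphi_n$ is piecewise affine with slopes in $\tfrac1N\Z$. This fails for $d\geq 2$, $1\leq k<d$. The function $z\mapsto\ln\rho(\Lambda^k A_{q}(z))$ is the maximum of $\ln|\lambda_j(z)|$ over the (algebraic) eigenvalue branches of $\Lambda^k A_q$. Away from zeros and branch points each branch is harmonic, but the \emph{crossing set} $\{|\lambda_i|=|\lambda_j|\}$ has real codimension one, and $\Delta\ln\rho$ is supported there with a continuous (non-atomic) density. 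Integrating over $x\in\R/\Z$, the resulting distributional second derivative $\varphi_n''$ is therefore not a sum of point masses: $\varphi_n$ is convex but genuinely curved wherever the two largest eigenvalue moduli of $\Lambda^k A_{q_n}(z)$ compete on a set of positive measure on the line $\im z=t$. (Only for $k=d$ — the determinant — or in the presence of a spectral gap of $\Lambda^k A_{q_n}(z)$ is the quantity a scalar holomorphic modulus, to which Jensen applies and which gives integer slopes.) The paper's footnote to Theorem~\ref{thm-iii} makes exactly this point: quantization and piecewise affinity \emph{fail} for rational $\alpha$, so there is no uniform $N$ and nothing to transfer to the limit.

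What is actually needed, and what the paper supplies, is a mechanism that produces a spectral gap before invoking the winding-number argument. This is Lemma~\ref{lemma-dominated} (proved via the Brownian-motion Lemma~\ref{lemma-2} of Section~\ref{brown}): for almost every small $t$, the cocycle $(\alpha,A(\cdot+it))$ is $k$-dominated. On the set of dominated $t$'s, Lemma~\ref{lem-omega-int} applies — the unstable bundle $u(z)\in G(k,d)$ is holomorphic, $\Lambda^k A(z)$ acts on the corresponding line $u_1\wedge\cdots\wedge u_k$ by a scalar holomorphic $\lambda(z)$, and $\omega^k$ is (minus) its winding number, hence an integer. Convexity then forces the one-sided derivative at $t=0$ into $\Z$, and more generally into $\tfrac1{b-a+1}\Z$ when several Lyapunov exponents coincide in a neighborhood of $t=0$. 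The role of rational approximation in the paper is entirely different: it is used (Theorem~\ref{thm-approx} and the proof of Theorem~\ref{thm-i}) to identify the holomorphic unstable/stable sections, not to quantize the acceleration directly. Your proposal omits the domination step, and without it the quantization claim you want to pass to the limit is simply false.
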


The last result means that the convex
functions $t \mapsto L^k(\alpha,A(\cdot+it))$ are in fact piecewise
affine.  As in \cite {Avi}, this behavior is connected to a quantization
phenomenon which we now describe.  If $L^k(\alpha,A) \neq -\infty$,
define the accelerations
\be
\omega^k = \lim_{\epsilon\to 0^+} \frac{1}{2\pi\epsilon}
\left(L^k(\alpha,A(\cdot+i\epsilon))
-L^k(\alpha,A) \right)\;,\quad
\omega_k=\omega^k-\omega^{k-1}.
\ee
It is easy to see that $\omega^k$ is an integer for $k$-dominated cocycles
(also, $\omega^d$ is always an integer if $L^d(\alpha,A) \neq -\infty$).
The next result shows that this topological phenomenon manifests also
in the general case:

\begin{thm}\label{thm-iii} Let $\alpha \in \R \setminus \Q$,
$A\in C^\omega(\R/\Z,\Ld)$.  Then
the acceleration is quantized: there exists $1\leq l\leq d$, $l\in\N$,
such that $l\omega^k$ and $l\omega_k$ are integers.\footnote{ We note
  that Theorems \ref{thm-ii}, \ref{thm-iii} do not in general hold for $\alpha\in \Q,$
(see a simple counterexample in Remark 5 in \cite{Avi}).}
If $A\in C^\omega(\R/\Z,\SL(d,\C))$ then $1\leq l \leq d-1$.
\end{thm}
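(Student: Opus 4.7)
\emph{Plan.} The strategy is to reduce the quantization statement to the $k$-dominated case via Theorems~\ref{thm-ii} and~\ref{thm-i}. By Theorem~\ref{thm-ii}, for each $k' \in \{1,\dots,d\}$ with $L^{k'}(\alpha,A) > -\infty$, the convex function $s \mapsto L^{k'}(\alpha, A(\cdot+is))$ is affine on some right-neighborhood $(0,\delta_{k'})$ of $0$: indeed, $k'$-regularity at every point of $(0,\delta_{k'})$ means the slope is locally constant, hence constant on the connected interval. Setting $\delta := \min_{k'}\delta_{k'}$ and fixing any $t \in (0,\delta)$, the common slope of each $L^{k'}$ on $(0,\delta)$ equals $2\pi\omega^{k'}$ (the acceleration at the original cocycle), so all accelerations may be computed using $A_t := A(\cdot + it)$ in place of $A$.

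Let $0 = k_0 < k_1 < \cdots < k_r = d$ enumerate the Lyapunov gap indices of $(\alpha,A_t)$, i.e.\ the $k_i \in \{1,\dots,d\}$ with $L_{k_i}(\alpha,A_t) > L_{k_i+1}(\alpha,A_t)$ under the convention $L_{d+1} = -\infty$. At each such $k_i$, $A_t$ is both $k_i$-regular (by the choice of $t$) and has a spectral gap, so Theorem~\ref{thm-i} supplies a $k_i$-dominated splitting $\C^d = E^+ \oplus E^-$. Restricting $\Lambda^{k_i} A_t$ to the invariant line bundle $\Lambda^{k_i} E^+$ yields a nowhere-vanishing scalar cocycle whose Lyapunov exponent is $L^{k_i}(\alpha, A_t)$, and whose acceleration is an integer by the Jensen-type argument indicated in the paragraph preceding the theorem; hence $\omega^{k_i} \in \Z$.

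For $k$ strictly inside a block $\{k_i+1,\dots,k_{i+1}-1\}$, the equality $L_{k_i+1} = \cdots = L_{k_{i+1}}$ on the block gives the affine identity
\[
L^k \;=\; L^{k_i} + \tfrac{k-k_i}{k_{i+1}-k_i}\bigl(L^{k_{i+1}} - L^{k_i}\bigr),
\]
which upon differentiation in the imaginary direction produces, with $l := k_{i+1}-k_i \leq d$,
\[
l\,\omega^k \;=\; l\,\omega^{k_i} + (k-k_i)(\omega^{k_{i+1}} - \omega^{k_i}) \;\in\; \Z,
\]
and likewise $l\,\omega_k = l(\omega^k - \omega^{k-1}) \in \Z$ (since $\omega^{k-1}$ lies in the same block or at a gap index). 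For the $\SL(d,\C)$ refinement: $L^d \equiv 0$ forces $\omega^d = 0$; if all Lyapunov exponents of $A_t$ coincide, the sum-zero constraint makes them identically $0$ and every $\omega^k$ vanishes, so $l = 1$ works. Otherwise there is a gap in $\{1,\dots,d-1\}$, so every block has size $\leq d-1$ and thus $l \leq d-1$.

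\emph{Main obstacle.} The pivotal step is the simultaneous affineness on a full right-neighborhood $(0,\delta)$, which follows from Theorem~\ref{thm-ii} plus connectedness of the interval and is the essential input from the analytic setup. The integer quantization at gap indices is standard via the determinant of the dominated factor and Jensen's formula. The rest is algebraic interpolation within blocks, and the denominator $k_{i+1}-k_i$ is automatically bounded by $d$ (or by $d-1$ in the $\SL$ case); the only subtlety is to handle separately the indices with $L^{k'} = -\infty$, which fall outside the scope of the acceleration and thus of the statement.
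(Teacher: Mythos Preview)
Your overall strategy --- shift into the complexified strip, read off integrality of $\omega^{k_i}$ at the Lyapunov gap indices via domination, then linearly interpolate within blocks of equal exponents --- matches the paper's proof, and your treatment of the $\SL(d,\C)$ refinement and of the indices with $L^{k'}=-\infty$ is the same as there.

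The gap is your appeal to Theorem~\ref{thm-ii} to produce a full right-interval $(0,\delta)$ on which every $L^{k'}$ is affine. In the paper's logical order, Theorem~\ref{thm-ii} is proved \emph{as a corollary of} Theorem~\ref{thm-iii}: quantization of the acceleration, applied at each base point $A(\cdot+it)$, forces the convex function $t\mapsto L^k(\alpha,A(\cdot+it))$ to be piecewise affine, hence $k$-regular away from the (isolated) corners. So as written your argument is circular, and there is no independent route to Theorem~\ref{thm-ii} in the paper that you can invoke instead.

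The paper's replacement for that step is Lemma~\ref{lemma-dominated} combined with right-continuity of the right-derivative of a convex function. If $L_k(\alpha,A(\cdot+it))-L_{k+1}(\alpha,A(\cdot+it))$ is not identically zero on any $[0,\epsilon]$, then Lemma~\ref{lemma-dominated} (applied at nearby base points where the gap is open) yields a sequence $t_n\to 0^+$ with $(\alpha,A(\cdot+it_n))$ $k$-dominated; Lemma~\ref{lem-omega-int} makes each $\omega^k(\alpha,A(\cdot+it_n))$ an integer, and since the right-derivative of $t\mapsto L^k$ is nondecreasing and right-continuous, these integers stabilize and pass to the limit, giving $\omega^k\in\Z$. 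The complementary case --- $L_j(\alpha,A(\cdot+it))=L_k(\alpha,A(\cdot+it))$ for all $j$ in a maximal block $[a,b]\ni k$ and all $t\in[0,\epsilon)$ --- is then handled exactly by your interpolation identity, with $\omega^{a-1},\omega^b\in\Z$ coming from the first case (or from the determinant computation when $b=d$). Replacing your use of Theorem~\ref{thm-ii} by this argument repairs the proof.
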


Theorems \ref{thm-i}-\ref{thm-iii} generalize earlier
results \cite{Avi} (also extended in \cite{jm}) obtained for $d=2.$

At this point, we must note a fundamental distinction between the analytic
and the continuous setups.
The Bochi-Viana Theorem (specified to cocycles over irrational translations)
is proved by showing that
if $L^k>L^{k+1}$ and $(\alpha,A)$ is not $k$-dominated then $A$ is not a
continuity point of $L^k$ on $C^0(\R/\Z,\Ld)$.
It turns out that for analytic cocycles, $L^k$ is continuous everywhere. 
Moreover, we may even perturb the {\it frequency,} and this indeed plays a
fundamental role in our analysis.

\begin{thm} \label{thm-cont}
The functions $\R\times C^\omega(\R/\Z,\Ld)\ni
(\alpha,A)\mapsto L_k(\alpha,A)\in[-\infty,\infty)$
are continuous at any $(\alpha',A')$ with $\alpha'\in \R\setminus\Q$.
\end{thm}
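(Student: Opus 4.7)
The plan is to prove joint continuity of each partial sum $L^k(\alpha,A) := L_1(\alpha,A)+\cdots+L_k(\alpha,A) = L_1(\alpha,\Lambda^k A)$, from which continuity of every $L_k = L^k - L^{k-1}$ follows. Upper semi-continuity of $L^k$ is standard: $a_n(\alpha,B) := \int_{\R/\Z}\ln\|B_n(\alpha,x)\|\,dx$ is subadditive in $n$ (integration absorbs the $\alpha$-shift), so by Fekete's lemma $L_1(\alpha,B) = \inf_{n\ge 1} a_n(\alpha,B)/n$ is an infimum of functions jointly continuous in $(\alpha,B)$.

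For the lower semi-continuity at $(\alpha',A')$, the case $L^k(\alpha',A') = -\infty$ being trivial, assume $L^k(\alpha',A') > -\infty$. The central object is the convex function $\phi_{(\alpha,A)}(t) := L^k(\alpha, A(\cdot+it))$ on an interval $(-\delta_0,\delta_0)$. The key intermediate step is joint continuity of $L^k$ at the shifted cocycle $(\alpha', A'(\cdot+it))$ for every small $t \neq 0$. Granting this, if $(\alpha_n,A_n) \to (\alpha',A')$ then $\phi_{(\alpha_n,A_n)}(t) \to \phi_{(\alpha',A')}(t)$ for every small $t \neq 0$; since each $\phi_{(\alpha_n,A_n)}$ is convex and $\phi_{(\alpha',A')}$ is continuous at $0$ (a finite convex function is continuous on the interior of its domain of finiteness), bounding $\phi_{(\alpha_n,A_n)}(0)$ above by $\frac{t_2\phi_{(\alpha_n,A_n)}(t_1) - t_1\phi_{(\alpha_n,A_n)}(t_2)}{t_2 - t_1}$ for $t_1 < 0 < t_2$ and below by $\frac{t_3\phi_{(\alpha_n,A_n)}(t_1) - t_1\phi_{(\alpha_n,A_n)}(t_3)}{t_3 - t_1}$ for $t_3 < t_1 < 0$, then sending $n\to\infty$ and shrinking $t_1, t_2 \to 0$, yields $\phi_{(\alpha_n,A_n)}(0) \to \phi_{(\alpha',A')}(0)$, which is the desired continuity.

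The intermediate step is where Theorems \ref{thm-i}--\ref{thm-iii} do the heavy lifting. Fix $t \neq 0$ small and let $[k''+1, k']$ be the plateau of the Lyapunov spectrum of $(\alpha', A'(\cdot+it))$ containing $k$ (with conventions $L_0 = +\infty$, $L_{d+1} = -\infty$). By Theorem \ref{thm-ii}, $(\alpha', A'(\cdot+it))$ is both $k'$- and $k''$-regular (whenever these indices are non-trivial); by Theorem \ref{thm-i}, the genuine gaps at $k'$ and $k''$ promote regularity to $k'$- and $k''$-domination (or use $L^d(\alpha,A) = \int_{\R/\Z}\ln|\det A(x)|\,dx$ in the boundary case $k' = d$, and $L^0 \equiv 0$ for $k'' = 0$). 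Since domination is a robust open condition under which the invariant subbundles, and hence the sums $L^{k'}, L^{k''}$, depend real-analytically on the cocycle, joint continuity of $L^{k'}$ and $L^{k''}$ at $(\alpha', A'(\cdot+it))$ follows. Combining this with the concavity of $j \mapsto L^j$ (equivalent to $L_j$ being non-increasing), which gives $L^k \ge L^{k''} + \frac{k - k''}{k' - k''}(L^{k'} - L^{k''})$ for every cocycle, and with the plateau equality at the reference shifted cocycle, together with the upper semi-continuity from the first step, yields joint continuity of $L^k$ at $(\alpha', A'(\cdot+it))$.

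The main obstacle is this intermediate step, which requires coordinating the plateau bookkeeping with the precise content of Theorems \ref{thm-i}--\ref{thm-iii}. The quantization of acceleration (Theorem \ref{thm-iii}) is crucial: it ensures that $\phi_{(\alpha', A')}$ is piecewise affine with only finitely many breakpoints on $(-\delta_0, \delta_0)$, so that the $k'$-regularity provided by Theorem \ref{thm-ii} persists on an entire one-sided interval around $0$, and the closing convex-analysis argument can be localized to a single affine segment on each side of $0$.
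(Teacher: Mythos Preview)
Your overall strategy---use domination at nearby complex shifts $t\neq 0$ to get continuity of $L^k$ there, then transfer to $t=0$ via convexity of $t\mapsto L^k(\alpha,A(\cdot+it))$, and handle plateaus by sandwiching---is exactly the paper's approach. The problem is a circular dependency in your citations.

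You invoke Theorem~\ref{thm-i} to pass from $k'$- and $k''$-regularity (supplied by Theorem~\ref{thm-ii}) to $k'$- and $k''$-domination at the shifted cocycle. But in the paper's logical order, Theorem~\ref{thm-i} is proved \emph{after} Theorem~\ref{thm-cont} and depends on it: the proof of Theorem~\ref{thm-i} goes through Theorem~\ref{thm-approx}, which explicitly uses the fact that $L_1(p_n/q_n,A)=L_1(\alpha,A)+o(1)$, citing Theorem~\ref{thm-cont}. So your argument assumes what it is trying to prove. (Theorems~\ref{thm-ii} and~\ref{thm-iii} are not circular---they rest only on Lemma~\ref{lemma-dominated} and Lemma~\ref{lem-omega-int}---but Theorem~\ref{thm-i} is.)

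The paper avoids this by never invoking Theorem~\ref{thm-i} in the proof of Theorem~\ref{thm-cont}. Instead it uses Lemma~\ref{lemma-dominated} directly: if $L_k(\alpha',A')>L_{k+1}(\alpha',A')$, then for \emph{almost every} small $|t|$ the shifted cocycle is $k$-dominated. This is proved from scratch via the Brownian-motion estimate (Lemma~\ref{lemma-2}) and the conefield criterion (Lemma~\ref{lemma-domination}), with no appeal to continuity of Lyapunov exponents. ``Almost every $t$'' is enough: one picks specific $t=\pm\epsilon,\pm 2\epsilon$ where domination holds, gets continuity of $L^k$ there, uses convexity to bound the secant slopes uniformly near $(\alpha',A')$, and then runs the limiting argument along a sequence $t_n\to 0$ of dominated shifts. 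Your convexity bookkeeping works just as well with ``almost every $t$'' in place of ``every $t\neq 0$''; the fix is simply to replace the appeal to Theorem~\ref{thm-i} by Lemma~\ref{lemma-dominated}. The plateau reduction can then be carried out at $t=0$ exactly as the paper does, rather than at each shifted $t$.
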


Theorem \ref{thm-cont} is optimal in that $L_k$ can be
discontinuous at rational frequencies\footnote{They are for the
  almost Mathieu cocycles $A(x)=\begin{pmatrix} E-\lambda\cos 2\pi x & -1\\ 1
  & 0 \end{pmatrix}$ as follows from \cite{K},  or see an example in the
  Remark 5 of \cite{Avi} } or in lower (even $C^\infty$) regularity \cite{WY,jm}.

This result generalizes earlier results \cite {BJ}, \cite {jm} for the case
$d=2$.  Bourgain \cite{B} obtained also joint continuity for
non-singular $d=2$ cocycles over
rotations of higher-dimensional tori. The extension to higher
$d$ has been open for over a decade.

 A somewhat related theme are  quantitative continuity results for (mainly, non-singular) analytic cocycles
 with a {\it fixed} Diophantine frequency (\cite {GS} (for $\SL(2,\R)$)) more
 recently extended  to $\GL(d,\C)$ in \cite{Sch,DK}).\footnote{It should be noted that the results of the present paper
  preceded the independent recent work \cite{Sch,DK},
(e.g. \cite{J})} Those also hold
 for the multi-frequency case. 

However we are particularly concerned with the dependence
on the frequency (and especially the behavior of the Lyapunov exponents of
rational approximations); among other things it is the key ingredient in the proofs of
Theorems 1.1-1.4.

We also note that all the other past and recent continuity results: both \cite{BJ,B,jm}
that provide joint continuity in the cocycle and frequency and \cite{GS,Sch,DK} that are for a
fixed Diophantine frequency, are based on some form of the Avalanche
principle (originally in \cite{GS}) and large deviation theorem (see \cite{bbook}).  
Here we develop a different strategy which is indeed
intimately related to the proof of the connection of regularity and
domination: it focuses on the direct construction of invariant cone fields
for certain complex phases.  This allow us to cover all irrational
frequencies without the need
to delve into arithmetic considerations.

Our approach of selecting complex phases for which such an analysis can be
carried out is 
ideologically close to the new proof of the result of \cite{BJ}
developed in the appendix of \cite{B}.  On the technical level,
our key analytic argument,
given in Section \ref{brown}, borrows some important ingredients
from \cite{A}.

Finally, the extension of various continuity results originally
obtained for $\SL(2,\C)$ to the singular case has been achieved
gradually, by overcoming a significant number of technical challenges
\cite{JKS,T,jm2,jm}. In our current approach  singularity of cocycles does not
present an additional difficulty.

{\bf Acknowledgement:} We would like to thank Christoph Marx and
Adriana Sanchez for their useful comments on this manuscript.

\section{A Brownian motion argument}\label{brown}
A quick motivation for the main Theorem of this section (which is of
general nature) is the following. Consider $\psi=\ln |P(x)|$ where $P$
is a trigonometric polynomial of degree $n$. Then, by the Lagrange
interpolation trick, that has been used in the proofs of localization
for the almost Mathieu operator, for any $\epsilon>0,$ $\psi(x)$ cannot
be smaller than $\sup \psi(x) -\epsilon$ at $n+1$ uniformly distributed
points, for large $n.$ The same cannot be said of course about an arbitrary
subharmonic function. A tool that has been used in the proofs of
localization for analytic potentials and continuity arguments is Large
Deviation Theorems, showing that ``almost invariant'' subharmonic
functions deviate from the mean only on sets of small measure. In the
present argument
the key idea is that complexifying the argument leads to many values
of the imaginary part where the situation is as nice as for the $n-$th
degree polynomial.

We start with what we call the Big Obstacle Lemma.

\begin{lemma}
\label{lemma-1}
There exists $c>0$ with the following property.
Let $B \subset \R^2$ be a Borel set with non-empty intersection with
$(-1,1) \times \{t\}$ for a subset of $t \in (-1,1)$ of Lebesgue measure at
least $\rho$.  Run Brownian motion
starting at the origin until it escapes from $(-2,2) \times (-2,2)$.  Then
the probability of hitting $B$ before escape is at least $c \rho$.

\end{lemma}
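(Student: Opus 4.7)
The plan is to prove the lemma via classical 2D potential theory: construct a probability measure $\mu$ supported on a compact subset of $B$ whose logarithmic energy is $O(1 + \log(1/\rho))$, then convert this into a lower bound for the Brownian hitting probability of $B$ via the Green's function of the square $D := (-2,2) \times (-2,2)$. The key geometric input is that any ``graph-like'' subset $\{(f(t),t) : t \in T'\}$ over a set $T' \subset (-1,1)$ of positive Lebesgue measure has points separated vertically by at least $|s-t|$, which alone forces logarithmic capacity of order at least $\rho$.

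The construction of $\mu$ proceeds as follows. Let $T \subset (-1,1)$ be the set of $t$ for which $B$ meets $(-1,1) \times \{t\}$; this is analytic (hence Lebesgue measurable) as the projection of a Borel set, and $|T| \geq \rho$ by hypothesis. By measurable selection (Jankov--von Neumann) there is a universally measurable $f : T \to (-1,1)$ with $(f(t), t) \in B$ for all $t \in T$. Shrinking via Lusin's theorem to a compact $T' \subset T$ with $|T'| \geq \rho/2$ on which $f$ is continuous, the graph $K := \{(f(t), t) : t \in T'\}$ is a compact subset of $B \cap [-1,1]^2$. Let $\mu$ be the normalized pushforward of Lebesgue measure on $T'$ under $t \mapsto (f(t),t)$. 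The inequality $|p_s - p_t| \geq |s-t|$ (since $p_t := (f(t),t)$ has $y$-coordinate $t$) bounds the logarithmic energy by a 1D integral:
$$I(\mu) := \iint \log\frac{1}{|p-p'|}\, d\mu(p)\, d\mu(p') \leq \frac{1}{|T'|^2}\iint_{T' \times T'}\log\frac{1}{|s-t|}\, ds\, dt \leq 1 + \log(4/\rho),$$
the last step by the bathtub principle: the inner integral is maximized, for fixed $|T'|$, when $T'$ is an interval centered at $s$.

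For the potential-theoretic conversion, write the Dirichlet Green's function as $G_D(x,y) = -\frac{1}{2\pi}\log|x-y| + h_D(x,y)$ with $h_D$ uniformly bounded on $[-1,1]^2 \times [-1,1]^2$. This yields $G_D \mu \leq C_\rho := O(1 + \log(1/\rho))$ on $K$, so $\nu := \mu/C_\rho$ has Green potential $v := G_D \nu \leq 1$ on $K$ and vanishes on $\partial D$. Comparing with $u(x) := P_x(\tau_K < \tau_{\partial D})$: the function $u - v$ is subharmonic in $D \setminus K$ with nonnegative boundary values on $K \cup \partial D$, so by the maximum principle $u \geq v$ in $D$. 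Since $G_D(0, \cdot) \geq g_0 > 0$ on $[-1,1]^2$, evaluating at $0$ gives
$$P_0(\text{hit } B \text{ before exit}) \geq u(0) \geq v(0) \geq \frac{g_0}{C_\rho} \geq \frac{c_1}{1 + \log(1/\rho)}.$$
The elementary inequality $1/(1 + \log(1/\rho)) \geq c_2\,\rho$ on $\rho \in (0, 2]$ then delivers the required bound $\geq c\rho$.

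The main technical obstacle is the last step: cleanly tracking absolute constants through the bounded harmonic correction $h_D$, and dealing with polar exceptional sets when invoking the maximum principle for $u - v$ (standard, given that $K$ has positive logarithmic capacity by our energy estimate). Since $K$ remains at a fixed positive distance from $\partial D$, all constants produced are universal, independent of $B$ and of $\rho$, which is what the lemma demands.
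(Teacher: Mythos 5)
Your argument is in substance correct and follows the same potential-theoretic strategy as the paper: reduce to a compact graph-like subset of $B$ via measurable selection and Lusin, estimate a logarithmic quantity of this set from the vertical separation $|p_s-p_t|\geq|s-t|$, and convert to a Brownian hitting probability. Where you differ is in the conversion step. The paper invokes the Martin-capacity criterion (M\"orters--Peres, Thm.~8.24), compares the Martin kernel to the Green kernel using $G(0,\cdot)\geq c>0$, and then estimates logarithmic capacity from below via the transfinite diameter using points in arithmetic progression. You instead build a Green potential $v=G_D\nu$ with $\nu=\mu/C_\rho$, verify $v\leq 1$ on $K$, and compare $v$ to the hitting probability $u$ by the maximum principle, which is more self-contained (no external black box) at the cost of carrying the correction $h_D$ and the polar exceptional sets explicitly.

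Two small imprecisions, both easily repaired. First, what the comparison needs is a \emph{pointwise} bound $U^\mu(p)=\int\log\frac{1}{|p-p'|}\,d\mu(p')\leq 1+\log(4/\rho)$ for every $p\in K$, not the energy bound $I(\mu)\leq 1+\log(4/\rho)$; the energy is merely the $\mu$-average of $U^\mu$, and in general does not control $\sup_K U^\mu$. Fortunately your exact same rearrangement estimate gives the pointwise bound directly: for $p=(f(s),s)$, $U^\mu(p)\leq\frac{1}{|T'|}\int_{T'}\log\frac{1}{|s-t|}\,dt\leq 1+\log(2/|T'|)$, so you should state the bound this way and drop the detour through $I(\mu)$. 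Second, the sign in your maximum-principle step is reversed: $u$ and $v$ are both harmonic in $D\setminus K$ (the potential $v$ is harmonic off the support of $\nu$), so you want either that $u-v$ is harmonic with nonnegative boundary values modulo a polar set, whence $u\geq v$ by the \emph{minimum} principle, or equivalently that $v-u$ is subharmonic, bounded above, with $\limsup\leq 0$ at the boundary off a polar set. As written ($u-v$ subharmonic with nonnegative boundary values) the maximum principle gives no information. Neither issue affects the validity of the approach.
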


\begin{proof}
We will first need some notions from potential theory.
Let $\mu$ be a continuous probability measure supported on a Borel subset
$A\subset \R^2$. Given a kernel $K$ i.e. a measurable function
$K:\R^2\times\R^2\to [0,\infty]$ such that $K(x,y) $ is a continuous
and decreasing function of $|x-y|$, one can define the energy of
$\mu,\; $ with respect to $K$ by 
\begin{equation}\label{energy}I_K(\mu)=\int_A\int_A
K(x,y)d\mu(x)d\mu(y).\end{equation}
 The corresponding capacity $Cap_K(A)$ is
defined as $\frac1{\inf_\mu I_K(\mu)}.$ Note that the standard
logarithmic capacity which we denote $\opCap (A)$ is defined differently
than $\opCap_{-\ln |x-y|}(A)$, namely,  $\opCap (A)=e^{-\inf_\mu I_K(\mu)}.$
The $\inf$ in (\ref{energy}) is achieved at the unique measure. In
case of the  logarithmic kernel  it is called
the equilibrium measure for $A.$ 

We will need two kernels: the Green kernel $G(x,y)$ and the Martin
kernel $M(x,y)$. Here $G$ is the
Green's function of Brownian motion stopped at exit from  $(-2,2) \times (-2,2)$, namely

\begin{equation}\label{green}
G(x,y) = -\frac 1\pi(\ln |x-y| -\E_x \ln |B(T) -y|)
\end{equation}
 where $B(T)$ is the Brownian motion at time of first
 hit of the boundary of $(-2,2) \times (-2,2)$  and $\E_x$ stands for
 the expectation over Brownian motion started at $x.$ $M(x,y)$ is
 defined as $\frac{G(x,y)}{G(0,y)}$ for $x\not= y$ and $M(x,x)=\infty.$

We will use that for compact sets $A \subset (-2,2) \times (-2,2),$ 
$$\P\{\mbox{hitting}\;\, A\;\, \mbox{before escape from}\;\, (-2,2) \times (-2,2)\} \geq
1/2\, \opCap_M (A)$$ (see \cite{mp}, Theorem 8.24).
Since $G(0,y)>c>0,$ this implies that for any probability measure $\mu,$
$I_M(\mu)<cI_G(\mu).$ From the explicit form of $G$ given in
(\ref{green}) it follows that for $A\subset  (-1,1) \times (-1,1),$  $I_G(\mu)<I_{-\pi^{-1}(\ln |x-y|)}(\mu)
+1.$

Thus, with $\mu_0$ an equilibrium measure of a closed $A\subset  (-1,1) \times (-1,1),$

$$I_M(\mu_0)\leq C( I_{-\ln |x-y|}(\mu_0)+1) = C(1-\ln\opCap (A)).$$

This implies that $$\P\{\mbox{hitting}\;\, A\;\, \mbox{before escape
  from}\;\, (-2,2) \times (-2,2)\} \geq \frac c{1-\ln\opCap (A)}\geq
c\opCap (A).$$

Since for Borel $B\subset\R^2,$ 
\begin{equation}\label{c}
\opCap (B) =\sup_{K } \opCap (K)
\end{equation}
where  $\sup$ runs over
compact subsets of $B,$ and probability of hitting $B$ before escape is
bounded below by probability of hitting $A$ for $A\subset B,$
 it is enough to estimate the logarithmic capacity of $B$  by $c\rho.$ 

Note that for subspaces $V\subset \R^2,$
\begin{equation}\label{d}
|Proj_V A| =\sup_K|Proj_V K|
\end{equation}
where  $\sup$ runs over
compact subsets of $B$ and $|\cdot|$ stands for the Lebesgue measure
in $V.$   To prove the nontrivial inequality in
(\ref{d}) observe that by the measurable section Theorem one can find
a measurable function $f:Proj_V A\to A$ such that $Proj_V f(y) =y.$
Then by Luzin's theorem, for any $\epsilon >0,$  $f$ is continuous on
a compact $C\subset Proj_VA$   of measure
at least $|Proj_V A|-\epsilon$, and thus $f(C)\subset A$ is a compact
with the desired measure of projection.

We now  use that for compact sets capacity coincides with the
transfinite diameter: \begin{equation}\label{cap}\opCap K = \lim_{n\to\infty}
\max_{z_1,...,z_n\in K}\bigg( \displaystyle\prod_{1\leq j<k\leq
  n}{|z_j-z_k|}\bigg)^{\frac 2{n(n-1)}}\end{equation}
Clearly,  for any compact $K\subset A$ the RHS of (\ref{cap})  is minorated
by the same quantity with $K$ replaced by the $Proj_VK.$

 It remains to note that for any
Borel $D\subset [0,1]$ of Lebesgue measure $\rho$ and any $b<\rho/n,$
there exist $z_1,...,z_n\in D$ that belong to an arithmetic
progression with step $b$, or equivalently with $|z_i-z_j|=k/b,$ some
$k,$ see e.g. \cite{j99}. 
Estimating the RHS of (\ref{cap}) for such $z_1,...,z_n$ leads to the claim.

 
\end{proof}
We can now move to the main Lemma of this section.
\begin{lemma}
\label{lemma-2}
Let $\epsilon>0$, $\delta>0$.
Let $\alpha \in \R \setminus \Q$ and let $\frac {p} {q} \in \Q$ be a
continued fraction approximant, and $q'$ the denominator of the
previous approximant.
Let $\psi$ be a subharmonic function on $|\im\, z|<\epsilon$ satisfying
$\psi(z) \leq 1$,
$\underline \psi=\inf_{|t|<\epsilon} \sup_{x \in \R/\Z} \psi(x+i t) \geq 0$,
and let $T$ be the set
of all $t \in (-\epsilon,\epsilon)$ such that
\be
\inf_{x \in \R/\Z}
\sup_{0 \leq k \leq q+q'-1} \psi(x+k \alpha+it) \leq \underline \psi-\delta.
\ee
Then $|T| \leq c_q$, where $c_q=c_q(\epsilon,\delta)>0$ satisfies
$\lim_{q \to \infty} c_q=0$.

\end{lemma}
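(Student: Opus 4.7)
The plan is to combine three ingredients: the three-distance theorem (to turn the continued-fraction approximant $p/q$ into quantitative density of the orbit $\{k\alpha\}$), Lemma~\ref{lemma-1} (to convert this density into a lower bound on a Brownian hitting probability), and the submean inequality for the subharmonic $\psi$ (to produce a matching upper bound). The rate $c_q\to 0$ will arise by taking the Brownian-motion scale $s_q$ to shrink with $q$, which is possible because the bad orbit becomes $\eta_q$-dense with $\eta_q\to 0$.

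First, for each $t\in T$ fix a witness $x_t\in\R/\Z$ such that $\psi(x_t+k\alpha+it)\le \underline\psi-\delta$ for every $k=0,1,\dots,q+q'-1$. Because $p/q$ is a continued-fraction approximant of $\alpha$, the three-distance theorem bounds the gaps of the finite orbit $\{x_t+k\alpha\bmod 1\}_{k=0}^{q+q'-1}$ by $\eta_q:=\|q\alpha\|+\|q'\alpha\|=O(1/q)\to 0$, so this orbit is $\eta_q$-dense on $\R/\Z$. Set $B:=\{z:|\im z|<\epsilon,\ \psi(z)\le\underline\psi-\delta\}$; then for every $t\in T$ the set $B$ meets every subarc of length $\ge \eta_q$ of the slice $\R/\Z\times\{t\}$.

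Next, fix a scale $s\in(\eta_q,\epsilon/4)$ and a reference point $z_0=x_0+it_0$ with $|t_0|<\epsilon/2$. Rescaling the square $Q=z_0+(-2s,2s)^2$ to $(-2,2)^2$ and applying Lemma~\ref{lemma-1} to the Brownian motion started at $z_0$ and stopped at the first of exit from $Q$ or contact with $B$ yields
$$
P(z_0,s)\;\ge\;c\cdot\frac{|T\cap(t_0-s,t_0+s)|}{s},
$$
because the $\eta_q$-density of the bad orbit guarantees that $B\cap\bigl((x_0-s,x_0+s)\times\{t\}\bigr)\neq\emptyset$ for every $t\in T\cap(t_0-s,t_0+s)$. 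Conversely, optional stopping for the subharmonic $\psi$ and the bound $\psi\le 1$ on the exit boundary produce
$$
\psi(z_0)\;\le\;P(z_0,s)(\underline\psi-\delta)+(1-P(z_0,s))\,M(z_0,s),
$$
with $M(z_0,s)\le 1$ the supremum of $\psi$ on the non-$B$ part of the exit distribution. Picking $x_0$ so that $\psi(z_0)\ge \underline\psi-\delta/4$, which is possible because $\sup_x\psi(x+it_0)\ge\underline\psi$, and rearranging gives the complementary inequality $P(z_0,s)\le (M(z_0,s)-\underline\psi+\delta/4)/(M(z_0,s)-\underline\psi+\delta)$.

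Combining the two bounds at a scale $s=s_q\downarrow 0$ lying just above $\eta_q$, integrating over $(x_0,t_0)\in\R/\Z\times(-\epsilon/2,\epsilon/2)$, and exploiting that upper semicontinuity drives $M(z_0,s_q)$ toward $\psi(z_0)$ as $s_q\to 0$ should produce $|T|\le c_q(\epsilon,\delta)$ with $c_q\to 0$. The hard part I expect is exactly this last step: converting the qualitative USC statement $M(z_0,s_q)\to\psi(z_0)$ into a quantitative rate that is uniform in the starting point. The naive bound $M\le 1$ only yields a $q$-independent estimate on $|T|$. Obtaining $c_q\to 0$ should require exploiting the convexity in $t$ of $t\mapsto\sup_x\psi(x+it)$ together with a refinement of the capacity estimate underlying Lemma~\ref{lemma-1}, in the spirit of the potential-theoretic input borrowed from \cite{A}.
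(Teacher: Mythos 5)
Your proposal correctly assembles the right ingredients --- the $1/q$-density of the finite orbit coming from the continued-fraction approximant, the Big Obstacle Lemma~\ref{lemma-1}, and the submartingale property of the subharmonic $\psi$ under Brownian motion --- but, as you yourself observe, the single-scale estimate you derive is $q$-independent: at any fixed scale $s>1/q$ the Big Obstacle Lemma gives a scale-invariant lower bound on the hitting probability, the submean inequality gives a scale-invariant upper bound, and together they produce a bound on $|T|$ that does not improve as $q\to\infty$. Your proposed remedy --- extracting a quantitative rate from the upper semicontinuity $M(z_0,s_q)\to\psi(z_0)$, aided by convexity of $t\mapsto\sup_x\psi(x+it)$ --- does not work here: the lemma is stated for an arbitrary subharmonic $\psi$ (not the specific $\phi_n$ arising later), so this map need not be convex, and upper semicontinuity gives no rate that is uniform over starting points. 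Moreover, the lower bound from Lemma~\ref{lemma-1} and the upper bound from the submean inequality point in opposite directions for a local density argument, so the combination cannot yield a global bound on $|T|$.

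The missing idea is a \emph{multi-layer stacking} argument. Partition the strip $|\im z|<\epsilon$ into $O(\epsilon q)$ horizontal bands of height $\approx 1/q$ (centered at heights $j/q$), call a band $\rho$-bad if $T$ occupies a fraction $>\rho$ of it, and run Brownian motion from a point $x_0+ij_0/q$ with $\psi(x_0+ij_0/q)\ge\underline\psi$ until it leaves the \emph{entire} strip, choosing $j_0$ so that roughly half of the $\rho$-bad bands lie above it and half below. To escape, the path must cross every band between its starting layer and the boundary; by Lemma~\ref{lemma-1} applied inside a box of side $\approx 1/q$ (where the orbit is $1/q$-dense, so the bad set projects vertically with density $\ge\rho$), each traversal of a $\rho$-bad band is an independent-in-flavour chance of probability $\ge c\rho$ to hit $B$. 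Hence if there are about $2l$ $\rho$-bad bands, the escape-without-hitting probability is $p_0\le e^{-(l-1)c\rho}$. On the other hand the submean inequality $\underline\psi\le p_0\cdot 1+(1-p_0)(\underline\psi-\delta)$ together with $\underline\psi\ge 0$ forces $p_0/(1-p_0)\ge\delta$, whence $l-1\lesssim\rho^{-1}\ln(1+1/\delta)$. Since each $\rho$-bad band contributes at most $1/q$ to $|T|$ and the remaining bands contribute at most $2\rho\epsilon$, one gets $|T|\lesssim l/q+\rho\epsilon$, and optimizing over $\rho$ yields $c_q\sim\bigl(\epsilon\,\ln(1/\delta)/q\bigr)^{1/2}\to 0$. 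It is this exponential gain from stacking many small obstacles on the way out of the strip --- rather than one rescaled box --- that produces the decay in $q$ and is absent from your argument.
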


\begin{proof}

Let $M$ be maximal with $\frac {M} {q}+\frac {1} {2q}<\epsilon$.
We say that some $j \in [-M,M]$
is $\rho$-bad if $|T \cap (\frac {j} {q}-\frac {1}
{2 q},\frac {j} {q}+\frac {1} {2 q})|>\frac {\rho} {q}$.

Let $B$ be the set of all $z$ with $|\im\, z|<\epsilon$ such that $\psi(z)
\leq \underline \psi-\delta$.  Notice that if $t \in T$ then there exists
$y \in \R/\Z$ such that $y+k \alpha+i t \in B$, $0 \leq k \leq q+q'-1$. 
Notice that $\{y+k \alpha+i t\}$ is $\frac {1} {q}$ dense in the circle $\im\,z=t$.

Let us consider any point of the form $x_0+i \frac {j} {q}$ where $x_0 \in
\R/\Z$ and $j$ is $\rho$-bad.  Let us start Brownian motion at $x_0+i\frac{j}{q}$, and
run it until it escapes $|\im\, z-\frac {j} {q}|<\frac {1} {q}$.  Then the
probability that the Brownian motion does not hit $B$ before escaping is
at most $e^{-\kappa}$ for some $\kappa=\kappa(\rho)>0$.  Indeed, this
probability is at most
that of escaping the square $(x_0-\frac {1} {2q},x_0+\frac
{1} {2q}) \times (\frac {j} {2q}-\frac {1} {2q},
\frac {j} {q}+\frac {1} {q})$ without hitting $B$.  One easily sees that
$B$ is a big obstacle for the Brownian motion in this rectangle by noticing
that the projection of $B
\cap (x_0-\frac {1} {2 q},x_0+\frac {1} {2 q}) \times (\frac {j} {q}-
\frac {1} {2 q},\frac {j} {q}+\frac {1} {2 q})$
on the second coordinate has measure at least $\frac {\rho} {q}$.
Therefore, by Lemma~\ref{lemma-1} $\kappa(\rho)\geq-\ln(1-c\rho)\geq c\rho$.

Assume that the number of $\rho$-bad $j$'s is either $2l$ or $2 l-1$.  Let $j_0$
be such that there are at least $l-1$ $\rho$-bad $j$'s bigger than
$j_0$ and at least $l-1$ $\rho$-bad $j$'s smaller than $j_0$.

Fix $x_0 \in \R/\Z$ such that $\psi(x_0+i \frac {j_0} {q})
\geq \underline \psi$.
Let us start Brownian motion from $x_0+i \frac {j_0}
{q}$, and run it until it escapes $|\im\, z|<\epsilon$.  Then
\be
\underline \psi \leq
\psi(x_0+i \frac {j_0} {q}) \leq p_0+(1-p_0) (\underline \psi-\delta),
\ee
where $p_0$ is the probability that Brownian motion escapes without hitting
any point in $B$.
Since $\underline \psi \geq 0$, we have
\be
\frac {p_0} {1-p_0} \geq \delta.
\ee

When the Brownian motion escapes, it has to go at least through $l-1$ layers of $\rho$-bad $j$'s, therefore
$p_0 \leq e^{-(l-1) \kappa}$, implying 
$l-1 \leq \frac{\ln((\delta+1)/\delta)}{\kappa} \leq \frac{C\ln(1+\frac
  1{\delta})}{\rho}.$
Since one has at most $2l$ $\rho$-bad $j$'s and $\epsilon-(\frac{M}{q}+\frac{1}{2q})<\frac{1}{q}$, one finds 
that $|T| \leq \frac {(2 l+2)} {q}+2 \rho \epsilon \leq C \frac {-\ln \delta} {\rho q}+2 \epsilon \rho$. Optimizing for
$\rho$ gives $|T| \leq C \epsilon^{1/2} q^{-1/2} (\ln (\frac
  1{\delta}))^{1/2}$.
\end{proof}

\section{A criterion for domination}

We will need a few well known properties of dominated cocycles.  The
discussion below is parallel to the $\SL(2,\R)$ case\footnote {In this case,
$1$-domination is the same as uniform hyperbolicity.} carried out in
detail in Section 2.1 of \cite {Av2}, so we omit the proofs.

The set of $k$-dimensional subspaces of $\C^d$ is a compact Grassmannian manifold with a holomorphic
structure (cf. Appendix~\ref{app}) and will be denoted by $G(k,d)$.
A $k$-conefield is an open set $U \subset \R/\Z \times G(k,d)$,
such that for every $x \in \R/\Z$ there exists $w \in G(k,d)$ and
$w' \in G(d-k,d)$ such that $(x,w) \in U$ and
$(x,\tilde w) \notin \overline U$ whenever
$\tilde w$ is not transverse to $w'$.  If $(\alpha,A)$ is $k$-dominated,
then it is easy to construct a $k$-conefield $U$ such that for every $(x,w)
\in \overline U$, then $w$ is transverse to the kernel of $A(x)$ and
$(x+\alpha,A(x) \cdot w) \in U$.
Conversely, $k$-domination can be detected by a {\it conefield
criterion}: there exist $n \geq 1$ and a $k$-conefield $U$
such that for every $(x,w) \in \overline
U$, 
$(x+n \alpha,A_n(x) \cdot w) \in U$.
The conefield criterion implies that $k$-domination holds
through an open set of $(\alpha,A) \in \R \times C^0(\R/\Z,\Ld)$.  

The dominated splitting for a $k$-dominated cocycle will be typically denoted by
$\C^d=u(x) \oplus s(x)$, $u(x)\in G(k,d),\,s(x)\in G(d-k,d)$.

In the particular case where $A$ admits a holomorphic extension through
$|\im\, z|<\epsilon_0$, we see that
there exists $0<\epsilon<\epsilon_0$ such that
$(\alpha,A(\cdot+i t))$ is $k$-dominated for $|t|<\epsilon$ and with
invariant sections of the form $u(\cdot+i t)$ and $s(\cdot+i t)$, with $u$
and $s$ holomorphic through $|\im\, z|<\epsilon$ 
(cf. Theorem~\ref{holomorphic}).

Before we can state our criterion we need the following lemma.
If for a matrix $B$ $\sigma_k(B)>\sigma_{k+1}(B)$ then we denote with $E_k^+(B)\in G(k,d)$ the $k$-dimensional subspace of $\C^d$ associated with the first $k$ singular values.
Moreover, $P_{E_k^+(B)}$ denotes the orthogonal projection on that subspace.

\begin{lemma}

Let $0<\rho \leq \frac {1} {4}$ be such
that $A,B$ satisfy $\sigma_2(A) \leq \rho^2 \sigma_1(A)$,
$\sigma_2(B) \leq \rho^2 \sigma_1(B)$ and $\sigma_1(B A) \geq
4 \rho \sigma_1(B) \sigma_1(A)$.  If $w \in \P \C^d$ satisfies
$\|P_{E^+_1(A)}|w\| \geq \rho$ then
$\|P_{E^+_1(B)}|(A \cdot w)\| \geq 2 \rho$.

\end{lemma}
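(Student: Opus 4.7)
The plan is to use the SVD of $A$ and $B$ to turn the three singular-value hypotheses into concrete geometric constraints. Let $u_1$ span $E^+_1(A)$, let $v_1$ be the corresponding top left singular vector of $A$, and let $e_1$, $f_1$ be the analogous unit vectors for $B$. Picking a unit representative $\hat w$ of $w$, decompose $\hat w = a u_1 + r$ with $r\perp u_1$ and $|a|\geq\rho$, and decompose $v_1 = b e_1 + s$ with $s\perp e_1$. Both $\sigma_2(A)\leq\rho^2\sigma_1(A)$ and $\sigma_2(B)\leq\rho^2\sigma_1(B)$ geometrically say that $A$ and $B$ are nearly rank-one: $A\hat w = a\sigma_1(A)v_1 + Ar$ is an orthogonal decomposition with $\|Ar\|\leq \rho^2\sigma_1(A)$, so $\|A\hat w\|\leq \sigma_1(A)\sqrt{|a|^2+\rho^4}$; an analogous decomposition and bound hold for $Bv_1$.

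The heart of the matter, and the step I expect to be the main obstacle, is to convert the composition hypothesis $\sigma_1(BA)\geq 4\rho\sigma_1(A)\sigma_1(B)$ into a lower bound on $|b| = |\langle v_1, e_1\rangle|$, i.e., into the statement that the top image direction of $A$ is almost a top input direction of $B$. The idea is to upper-bound $\sigma_1(BA)$: for any unit $w'$, decomposing $w' = c u_1 + r'$ with $r'\perp u_1$ gives $Aw' = c\sigma_1(A)v_1 + Ar'$ with $\|Ar'\|\leq\rho^2\sigma_1(A)$, whence
\[
\|BAw'\|\leq |c|\sigma_1(A)\|Bv_1\| + \sigma_1(B)\|Ar'\|\leq \sigma_1(A)\|Bv_1\| + \rho^2\sigma_1(A)\sigma_1(B).
\]
Taking the supremum and invoking the hypothesis yields $\|Bv_1\|\geq (4\rho-\rho^2)\sigma_1(B)$. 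Combining with the orthogonal decomposition $Bv_1 = b\sigma_1(B)f_1 + Bs$ (so $\|Bv_1\|^2\leq(|b|^2+\rho^4)\sigma_1(B)^2$) and squaring, we obtain $|b|^2\geq (4\rho-\rho^2)^2-\rho^4 = \rho^2(16-8\rho)$; since $\rho\leq 1/4$, this gives $|b|\geq \rho\sqrt{14}$.

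Once the alignment is in hand, the conclusion is a direct estimate. Projecting $A\hat w$ onto $e_1$, using $\langle v_1, e_1\rangle = b$ and $|\langle Ar, e_1\rangle|\leq \rho^2\sigma_1(A)$,
\[
|\langle A\hat w, e_1\rangle|\geq (|a||b|-\rho^2)\sigma_1(A),
\]
so
\[
\|P_{E^+_1(B)}|(A\cdot w)\|\;=\;\frac{|\langle A\hat w,e_1\rangle|}{\|A\hat w\|}\;\geq\;\frac{|a||b|-\rho^2}{\sqrt{|a|^2+\rho^4}}.
\]
A quick derivative computation shows this ratio is monotone increasing in each of $|a|$ and $|b|$, so the worst case is $|a|=\rho$, $|b|=\rho\sqrt{14}$, yielding the lower bound $(\sqrt{14}-1)\rho/\sqrt{1+\rho^2}$, which comfortably exceeds $2\rho$ when $\rho\leq 1/4$. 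The numerical factor $4$ in the composition hypothesis is calibrated precisely to absorb the two independent $\rho^2$-errors—one from $\sigma_2(A)$ (in bounding $\sigma_1(BA)$ by $\|Bv_1\|$) and one from $\sigma_2(B)$ (in relating $\|Bv_1\|$ to $|b|$)—while still leaving the clean target $2\rho$.
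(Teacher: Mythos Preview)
Your proof is correct and follows essentially the same route as the paper's: both extract the alignment $\gamma=|b|=\|P_{E^+_1(B)}|A\cdot E^+_1(A)\|$ between the top output direction of $A$ and the top input direction of $B$, bound it from below via the hypothesis $\sigma_1(BA)\geq 4\rho\,\sigma_1(A)\sigma_1(B)$, and then plug this into a direct estimate of the projection ratio. Your systematic use of Pythagoras (instead of the triangle inequality) yields slightly sharper intermediate constants ($|b|\geq\rho\sqrt{14}$ versus the paper's $\gamma\geq 3.5\rho$), but the structure of the argument is the same.
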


\begin{proof}

Let $\gamma=\|P_{E^+_1(B)}|A \cdot E_1^+(A)\|$.

Let $v \in E^+_1(BA)$ be a unit vector, and let $y=P_{E^+_1(A)} \cdot v$,
$z=v-y$.  Then
$\sigma_1(BA)=\|BA \cdot v\| \leq \|BA \cdot y\|+\|BA \cdot z\|$.
Clearly $\|BA \cdot z\| \leq
\sigma_1(B) \sigma_2(A) $ and $\|BA \cdot y\| \leq
\gamma \sigma_1(A) \sigma_1(B)+\sigma_1(A) \sigma_2(B)$.  It follows that
$\gamma \geq 4 \rho-2 \rho^2\geq 3.5 \rho$, as $\rho\leq\frac14$.

Let now $w \in \C^d$ be a unit vector  such that $\|P_{E^+_1(A)} w\| \geq
\rho$, write $w=u+x$ with $u=P_{E_1^+(A)}(w)$, 
then $\|u\|\geq \rho$ and hence
\begin{align}
\frac{\|P_{E^+_1(B)} A w\|}{\|Aw\|} &\geq
 \frac{\gamma \|Au\|-\|Ax\|}{\|Au\|+\|Ax\|}\geq
\frac{\gamma \sigma_1(A) \|u\|-\sigma_2(A)}{\sigma_1(A)\|u\|+\sigma_2(A)}\geq \notag \\
&\geq \frac{\gamma-\frac{\rho^2}{\|u\|}}{1+\frac{\rho^2}{\|u\|}}\geq
\frac{\gamma-\rho}{1+\rho}\geq\frac45\,\cdot\,\frac52\rho
= 2\rho\;. 
\end{align}
Thus $\|P_{E^+_1(B)}|A \cdot w\| \geq 2 \rho$.
\end{proof}

Now we can formulate a criterion for domination.

\begin{lemma}
\label{lemma-domination}
Assume that there exists $n \in \N$ and $0<\rho \leq \frac {1} {4}$ such
that for every $x \in \R/\Z$, $\sigma_2(A_n(x)) \leq \rho^2
\sigma_1(A_n(x))$, $\sigma_2(A_n(x+n \alpha)) \leq \rho^2 \sigma_1(A_n(x+n
\alpha))$ and $\sigma_1(A_{2 n}(x)) \geq 4 \rho \sigma_1(A_n(x+n \alpha))
\sigma_1(A_n(x))$.  Then the cocycle
$(\alpha,A)$ is $1$-dominated.

\end{lemma}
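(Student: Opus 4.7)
The plan is to construct an explicit invariant $1$-conefield for the $n$-th iterate of the cocycle using the preceding projection lemma, and then invoke the conefield criterion for domination recalled at the start of this section.

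Concretely, I would set
\[
U \;=\; \bigl\{(x,w)\in \R/\Z\times \P\C^d \;:\; \|P_{E_1^+(A_n(x))}\,\hat w\| > \rho\bigr\},
\]
where $\hat w$ denotes a unit representative of the line $w$. The first hypothesis forces a uniform singular-value gap $\sigma_1(A_n(x)) > \sigma_2(A_n(x))$ (the degenerate case $\sigma_1(A_n(x))\equiv 0$ is incompatible with the third hypothesis being nontrivially usable, and may be ignored), so $x\mapsto E_1^+(A_n(x))$ is continuous and $U$ is open. For each $x$, the point $(x, E_1^+(A_n(x)))$ lies in $U$; and for the hyperplane $w'=E_1^+(A_n(x))^\perp$, any line $\tilde w\subset w'$ has $P_{E_1^+(A_n(x))}\hat{\tilde w} = 0$, so $(x,\tilde w)\notin \overline U$. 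Thus $U$ satisfies the definition of a $1$-conefield.

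To verify the conefield criterion with the given $n$, fix $(x,w)\in \overline U$ and apply the preceding lemma with $A:=A_n(x)$ and $B:=A_n(x+n\alpha)$, so that $BA = A_{2n}(x)$. The three standing assumptions translate verbatim into the three numerical hypotheses of that lemma with the same $\rho$, and the conclusion gives
\[
\|P_{E_1^+(A_n(x+n\alpha))}\,\widehat{A_n(x)\cdot w}\| \;\geq\; 2\rho \;>\; \rho,
\]
which is precisely $(x+n\alpha,\,A_n(x)\cdot w)\in U$. Since this strict inclusion $\overline U\to U$ holds for every point of $\overline U$, the conefield criterion is fulfilled and $(\alpha,A)$ is $1$-dominated.

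No serious obstacle arises here: the single point requiring care is checking that $x\mapsto E_1^+(A_n(x))$ is well defined and continuous, which is automatic from the uniform singular-value gap supplied by the first hypothesis; the rest of the argument is a direct bookkeeping assembly of the previous lemma with the conefield criterion, with the factor $2\rho>\rho$ providing the required strict contraction of the cone.
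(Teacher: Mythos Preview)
Your proof is correct and takes exactly the same approach as the paper: define the conefield $U=\{(x,w):\|P_{E_1^+(A_n(x))}\hat w\|>\rho\}$, apply the preceding projection lemma with $A=A_n(x)$, $B=A_n(x+n\alpha)$ to get strict invariance, and invoke the conefield criterion. The paper's own proof is a single sentence to this effect; you have simply filled in the routine verifications (openness of $U$, the conefield axioms, and the bookkeeping) that the paper leaves to the reader.
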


\begin{proof}
The set $U=\{(x,w)\,:\,\|P_{E^+_1(A_n(x))}|w\|>\rho\}$ is a conefield and satisfies the
conefield criterion for domination.
\end{proof}



\section{Continuity of the Lyapunov exponents}

Recall that $L_j(\alpha,A)$ denotes the $j$-th Lyapunov exponent of the cocycle
$(\alpha,A)$, $L^k=\sum_{j=1}^k L_j$, $L^k(\alpha,A)=L_1(\alpha,\Lambda^k A)$.

From now on we consider cocycles $(\alpha,A)$ with $A$ analytic.

\begin{lemma}
\label{lemma-dominated}
Let $\alpha \in \R \setminus \Q$.
Assume that $L_k(\alpha,A)>L_{k+1}(\alpha,A)$.
Then there exists $\epsilon>0$ such that for almost every
$|t|<\epsilon$ the cocycle $x \mapsto A(x+i t)$ is
$k$-dominated.

\end{lemma}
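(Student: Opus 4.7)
The plan is to pass to the $k$-th exterior power and apply the criterion of Lemma~\ref{lemma-domination} for $1$-domination, with the hypotheses verified using the Brownian-motion estimate of Lemma~\ref{lemma-2}. Setting $B := \Lambda^k A$, $1$-domination of $(\alpha, B(\cdot+it))$ is equivalent to $k$-domination of $(\alpha, A(\cdot+it))$, and the Lyapunov gap persists ($L_1(\alpha, B) - L_2(\alpha, B) = L_k(\alpha, A) - L_{k+1}(\alpha, A) > 0$), so it suffices to handle $k = 1$ with the cocycle $B$. Consider the subharmonic functions $\psi_n(z) := \tfrac{1}{n}\ln\|B_n(z)\|$ and $\tilde\psi_n(z) := \tfrac{1}{n}\ln\|\Lambda^2 B_n(z)\|$ on the strip of analyticity; their $x$-averages converge (uniformly in $t$) to the convex functions $L^1(t) := L_1(\alpha, B(\cdot+it))$ and $L^1(t) + L^2(t)$, respectively. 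By continuity of these convex functions together with the gap assumption, $\Delta(t) := L^1(t) - L^2(t) \geq c > 0$ on some interval $|t| < \epsilon_1$. Since $\sigma_1 \geq \sigma_2$, the pointwise inequality $2\psi_n \geq \tilde\psi_n$ holds, and $2\psi_n - \tilde\psi_n = \tfrac{1}{n}\ln(\sigma_1/\sigma_2)(B_n)$ is non-negative.

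The conditions of Lemma~\ref{lemma-domination} to verify for $(\alpha, B(\cdot + it))$ are: (i) $2\psi_n(x + it) - \tilde\psi_n(x + it) \geq \tfrac{2}{n}\ln(1/\rho)$ uniformly in $x \in \R/\Z$, (ii) the same at $x + n\alpha$, and (iii) $\|B_{2n}(x + it)\| \geq 4\rho\, \|B_n(x + n\alpha + it)\|\,\|B_n(x + it)\|$. Fix a large continued fraction approximant $p/q$ of $\alpha$ with previous denominator $q'$, and set $n := q + q' - 1$. Lemma~\ref{lemma-2}, applied after renormalizing $\psi_n$ to meet its hypotheses $\psi \leq 1$ and $\underline\psi \geq 0$, yields: for all $t$ outside a set of measure $c_q \to 0$, every $x \in \R/\Z$ admits some $j(x) \in \{0, \ldots, n\}$ with $\psi_n(x + j(x)\alpha + it) \geq L^1(t) - \delta$, with $\delta$ arbitrarily small as $q \to \infty$. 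A complementary subharmonic-envelope argument (using the sandwich $\tilde\psi_n \leq 2\psi_n$ and the convex control of the $x$-averages) gives the uniform upper bound $\tilde\psi_n(x + it) \leq L^1(t) + L^2(t) + \delta$ for most $t$. Combining these at the shifted base point $y := x + j(x)\alpha$ yields $2\psi_n(y + it) - \tilde\psi_n(y + it) \geq \Delta(t)/2 > 0$ for every $y$ in the image of the shift map, and relabeling gives (i) for every $x$; (ii) is analogous. Condition (iii) follows from (i), (ii) because when $B_n(z)$ is nearly rank one, the holomorphy of its top singular direction in $t$ ensures the required near-alignment with the top singular direction of $B_n(z + n\alpha)$ for a.e.\ $t$, keeping $\|B_{2n}(z)\|$ within an $O(1)$ factor of $\|B_n(z + n\alpha)\|\|B_n(z)\|$. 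Lemma~\ref{lemma-domination} then delivers $1$-domination of $(\alpha, B(\cdot + it))$ for a.e.\ $|t| < \epsilon_1$.

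\textbf{Main obstacle.} The hardest step is the uniform upper bound on $\tilde\psi_n$: Lemma~\ref{lemma-2} naturally produces pointwise \emph{lower} bounds on subharmonic functions at shifted orbital points, and a matching upper bound on the distinct subharmonic function $\tilde\psi_n$ is not automatic. The workaround exploits the pointwise sandwich $\tilde\psi_n \leq 2\psi_n$ together with the strict gap $\Delta(t)$ in the $x$-averages: once $\psi_n$ is uniformly close to $L^1(t)$, the average constraint forces $\tilde\psi_n$ to concentrate uniformly around $L^1(t) + L^2(t)$ (otherwise the integral relation $\int\tilde\psi_n(x+it)\,dx \to L^1(t)+L^2(t)$ would be violated). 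A secondary subtlety is passing from ``for every $x$, some shift $j(x)$ makes $\psi_n(x+j(x)\alpha+it)$ large'' to a uniform statement at every point of $\R/\Z$, absorbed by relabeling the base point and using that as $x$ ranges over $\R/\Z$ the shifted points $x + j(x)\alpha$ effectively cover the circle.
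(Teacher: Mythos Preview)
Your proposal has the right overall architecture (reduce to $k=1$ via exterior powers, feed the subharmonic function $\psi_n = \tfrac{1}{n}\ln\|B_n\|$ into Lemma~\ref{lemma-2}, then invoke Lemma~\ref{lemma-domination}), but the central step contains a genuine gap. The ``relabeling'' you use to pass from ``for every $x$ there is a shift $j(x)$ with $\psi_n(x+j(x)\alpha+it)$ large'' to ``$\psi_n(y+it)$ is large for every $y$'' is invalid: the map $x \mapsto x+j(x)\alpha$ need not be surjective. For a crude illustration, if $\psi_n$ is large on half the circle and small on the other half, every length-$(q+q')$ orbit visits the large half, so $j(x)$ can always land there and the image misses the small half entirely. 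Lemma~\ref{lemma-2} by itself only controls heights $t$ at which $\psi_n$ is small along an \emph{entire} orbital segment; it does not rule out $\psi_n$ being small at a single point. The paper closes this gap via a near-invariance estimate (their inequality~(4.4)): if $\phi_n(z)$ is small at one point, then $\phi_{n+q+q'}$ is small along the whole segment $z-q\alpha+k\alpha$, $0 \leq k \leq q+q'-1$. This embeds the bad set $T_n=\{t:\phi_n(\cdot+it)$ is small somewhere$\}$ into a set $T_{n+q+q',q}$ that Lemma~\ref{lemma-2} does control, yielding $|T_n|\to 0$ and hence the uniform-in-$x$ lower bound on $\phi_n$ for a.e.\ $t$. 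Note this requires $n$ and $q$ to be decoupled, whereas you pin $n=q+q'-1$.

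Two further points. First, what you flag as the ``main obstacle'' --- a uniform upper bound on $\tilde\psi_n=\tfrac{1}{n}\ln\|\Lambda^2 B_n\|$ --- is in fact the easy part: it holds for \emph{all} $t$ in a small strip, directly from unique ergodicity plus subadditivity (the paper's (4.1)--(4.2)), with no appeal to Lemma~\ref{lemma-2}; your proposed integral-concentration argument cannot produce a pointwise bound anyway. Second, your justification of condition~(iii) via ``holomorphy of the top singular direction ensures near-alignment'' is not a proof. The paper obtains (iii) arithmetically: for $t \notin T_n \cup T_{2n}$ one has uniform lower bounds on both $\phi_n$ and $\phi_{2n}$, and combining these with the uniform upper bound on $\phi_n$ gives $\sigma_1(B_{2n}) \geq e^{-8\kappa n}\sigma_1(B_n(\cdot+n\alpha))\,\sigma_1(B_n)$ immediately.
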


\begin{proof}

Taking exterior products, we reduce to the case $k=1$.  Let
$L_1=L_1(\alpha,A)$ and $L_2=\max \{L_2(\alpha,A),L_1(\alpha,A)-1\}$,
$L^1=L_1$, $L^2=L_1+L_2$.

Fix $0<\kappa<\frac {L_1-L_2} {24}$.
By unique ergodicity\footnote{ see a more detailed argument in the
  proof of  Lemma~\ref{lem-approx}.}, there exists $n_0 \in \N$ such
that
\be
\sup_{x \in \R/\Z}
\frac {1} {n} \ln \|\Lambda^j A_n(x)\| \leq L^j+\kappa,
\quad 1 \leq j \leq 2,
\ee
holds for $n \geq n_0$.
Fix $\epsilon_0>0$ sufficiently small so that
\be \label{lem5-bound-above}
\sup_{|\im\, z|<\epsilon}
\frac {1} {n} \ln \|\Lambda^j A_n(z)\| \leq L^j+2\kappa, \quad 1
\leq j \leq 2,
\ee
holds for $n_0 \leq n \leq 2 n_0-1$, and hence (by subadditivity) for all $n
\geq n_0$.

The function $t \mapsto L^j(\alpha,A(\cdot+i t))$ is convex, and hence
continuous.  Take $0<\epsilon<\epsilon_0$ such that for
$|t|<\epsilon,$ $L_1(\alpha,A(\cdot+i
t)) \geq L_1-\kappa$.  In particular, for $|t|<\epsilon$ we have
\be \label{lem5-bound-below}
\sup_{x \in \R/\Z} \frac {1} {n} \ln \|A_n(x+it)\| \geq L_1-\kappa.
\ee

Fix a continued fraction  approximant $\frac {p} {q}$ of $\alpha$ and let $q'$ be the
denominator of the previous approximant.
For any $n \geq n_0$, let
$\phi_n(z)=\frac {1} {n} \ln \|A_n(z)\|$ which is subharmonic 
in $|\im\, z|<\epsilon$.  Notice that
\be \label{lem5-est-phi}
\sup_{0 \leq k \leq q+q'-1} \phi_{n+q+q'}(z-q \alpha+k \alpha) \leq \frac {n}
{n+q+q'} \phi_n(z)+\frac {q+q'} {n+q+q'} \sup_{|\im\, z|<\epsilon} \ln \|A(z)\|.
\ee

Let $T_n$ be the set of all $|t|<\epsilon$ such that there exists
$x \in \R/\Z$ with $\phi_n(x+i t) \leq L_1-3 \kappa$, and let
$T_{n,q}$ be the set of all
$|t|<\epsilon$ such that there exists $x \in \R/\Z$ with
$\phi_n(x+i t+k \alpha) \leq L_1-2 \kappa$ for all $k=0,\ldots,q+q'-1$.  By \eqref {lem5-est-phi}, there
exists $n(q) \in \N$ such that for $n \geq n(q)$ we have $T_n \subset
T_{n+q+q',q}$.

By Lemma~\ref{lemma-2} (applied to the function
$\psi=\frac {\phi_n-(L_1-\kappa)} {3 \kappa}$ and $\delta=\frac
{1} {3}$),
for $n \geq n_0$ we have $|T_{n,q}| \leq c_q$, with $\lim c_q=0$.  Thus for
$n \geq \max \{n_0,n(q)\}$ we have $|T_n| \leq c_q$ as well.  It follows
that $\lim |T_n|=0$.

In particular, for almost every $|t|<\epsilon$, there exist arbitrarily
large $n$ with $t \notin T_n \cup
T_{2 n}$.  Fix such $t$ and $n$.  Then for every $x
\in \R/\Z$, letting $W_1=A_n(x+i t)$ and $W_2=A_n(x+i t+n \alpha)$, so that
$W_2 W_1=A_{2 n}(x+i t)$, we have by \eqref{lem5-bound-above} and \eqref{lem5-bound-below}
\be
\sigma_1(W_2 W_1) \geq e^{-8 \kappa n} \sigma_1(W_2)
\sigma_1(W_1),
\ee
as well as
\be
\sigma_2(W_j)=\frac{\|\Lambda^2 W_j\|}{\|W_j\|^2} \sigma_1(W_j) \leq e^{(L_2-L_1+8 \kappa) n}
\sigma_1(W_j), \quad 1 \leq j \leq 2.
\ee
For large $n$ (such that $e^{(L_2-L_1+24 \kappa) n} \leq
\frac {1} {16}$), we can apply Lemma~\ref{lemma-domination} with $\rho=\frac 14e^{-8 \kappa
n}$, to conclude that $(\alpha,A(\cdot+i t))$ is $1$-dominated.
Note that the whole argument also works if $A$ is not invertible and even if
$L_2(\alpha,A)=-\infty$. By taking exterior products
the case where $L_k(\alpha,A)$ is finite but $L_{k+1}(\alpha,A)=-\infty$ is also covered. 
\end{proof}


Now we are ready to prove Theorem~\ref{thm-cont}.


\renewcommand{\proofname}{Proof of Theorem~\ref{thm-cont}}

\begin{proof}

Consider first the case $L_k(\alpha',A')=-\infty$. As $L^k=L_k+L^{k-1}$, this means 
$L^k(\alpha',A')=-\infty$. By upper-semi continuity, $L^k$ is continuous at $(\alpha',A')$.
As $kL_k\leq L^k$ we obtain for $(\alpha_n,A_n)\to(\alpha',A')$ that
$L_k(\alpha_n,A_n)\to-\infty$ as well, showing continuity. 

Let $L_k(\alpha',A')>-\infty$.
By the definition of the inductive topology, it is enough to consider the
restriction to $C^\omega_{\epsilon_0}(\R/\Z,\Ld)$ for arbitrary
$\epsilon_0>0$. Let $\alpha'\in \R\backslash\Q.$

If $L_k(\alpha',A')>L_{k+1}(\alpha',A')$, we choose $\epsilon>0$ small
such that
$(\alpha',A'(\cdot+i t))$ is $k$-dominated for $t=\pm \epsilon,\pm 2
\epsilon$.  Then $(\alpha,A) \mapsto
L^k(\alpha,A(\cdot+i t))$ is continuous in a neighborhood of $(\alpha',A')$
for $t=\pm \epsilon,\pm 2\epsilon$.
Let $s_{(\alpha,A)}(a,b)=(b-a)^{-1}[L^k(\alpha,A(\cdot+ib))-L^k(\alpha,A(\cdot+ia))]$ be the slope of
the secant of the function $t\mapsto L^k(\alpha,A(\cdot+it))$ from $a$ to $b$.
By convexity, for $|t|<\epsilon$ one finds
\be
s_{(\alpha,A)}(-\epsilon,-2\epsilon) \leq s_{(\alpha,A)}(0,t) \leq s_{(\alpha,A)}(\epsilon,2\epsilon)\;.
\ee
Since $(\alpha,A)\mapsto s_{(\alpha,A)}(\pm \epsilon, \pm 2\epsilon)$ are continuous at $(\alpha',A')$, we find a neighborhood
$\UU$ of $(\alpha',A')$ and a uniform constant $C$, such that for 
$(\alpha,A)\in\UU$ and $|t|<\epsilon$,
$|L^k(\alpha,A(\cdot+i t))-L^k(\alpha,A)| \leq C |t|$.  Considering a
sequence $t_n \to 0$ for which $(\alpha',A'(\cdot+i t_n))$ is $k$-dominated,
and hence $(\alpha,A) \mapsto L^k(\alpha,A(\cdot+i t))$ is continuous on a
(possibly decreasing with $n$) neighborhood of $(\alpha',A')$, we get that
$L^k$ is continuous at $(\alpha',A')$.

Assume now that $L_j(\alpha',A')=L_k(\alpha',A')>-\infty$ for $j$
in a maximal interval $[a,b]$ containing $k$.
Then $L^{a-1}$ and $L^b$ are continuous at
$(\alpha',A')$.\footnote{
  If $b=d$ it follows as well since $L^d =\int\ln |\det A | dx,$ which
  is continuous on $C^\omega(\R/\Z,\Ld)$, see e.g. \cite{jm2}. 
  For $a=1$ we just define $L^0=0$ so that $L_1=L^1-L^0$.}
Since $L^a$ and $L^{b-1}$ are
upper-semicontinuous, $L_a$ is
upper-semicontinuous at $(\alpha',A')$ and $L_b$ is lower-semicontinuous at
$(\alpha',A')$.  Since $L_a \geq L_j \geq L_b$ for $a \leq j \leq b$
and $L_a(\alpha',A')=L_b(\alpha',A')$ by hypothesis,
we conclude that
$L_j$ is continuous at $(\alpha',A')$
for $a \leq j \leq b$.  The result follows.
\end{proof}

\renewcommand{\proofname}{Proof.}


\section{Regularity and approximation through rationals}

Recall that
\be
\omega^k=\lim_{\epsilon \to 0+} \frac {1} {2 \pi \epsilon}
(L^k(\alpha,A(\cdot+i \epsilon))-L^k(\alpha,A)).
\ee
We let the frequency $\alpha$ be irrational from now on.
Furthermore we assume that $A$ extends to a complex analytic function
in a neighborhood of $|\im\,z|\leq \delta$.

Recall that $(\alpha,A)$ is $k$-regular if $t \mapsto L^k(\alpha,A(\cdot+i
t))$ is an affine function for $|t|<\epsilon$.

Let
$$
\R\setminus \Q \ni \alpha=\lim_{n\to\infty}
\frac{p_n}{q_n}\qtx{with} p_n,q_n \in \Z_+\;,\;\;(p_n,q_n)=1\;
$$
and define for $z=x+it$ and $p/q\in\Q$
$$ L^k(p/q,A,x):=\lim_{n\to\infty} 1/n \ln \|\Lambda^k A_n(p/q,x)\| $$
Clearly, it exists for all $x\in \T$ and we have $ L^k(p/q,A,x)=\frac{1}{q}\ln
\rho (\Lambda^k A_q(p/q,x))$ where $\rho(A_*)$ is the spectral radius of $A_* \in
\LL(\C^d,\C^d)$. By definition, $$L^k(p/q,A)=\int_{\R/\Z}L^k(p/q,A,x)dx.$$

We start with the following crucial lemma.

\begin{lemma} \label{lem-approx}
If $L^k(\alpha,A)>-\infty$ then one has uniformly for small $t$ and all $x$ that
\be\label{eq-est-rat}
L^k\left(\frac{p_n}{q_n},A(\cdot +it),x\right)\;\leq\;L^k(\alpha,A(\cdot+it))\;+\;o(1)\;.
\ee
More precisely, the estimate being uniform means that for some $\delta >0$,
\be
\limsup_{n\to\infty} \;\sup_{x\in\R/\Z}\; \sup_{|t|\leq \delta} 
\left[L^k\left(\frac{p_n}{q_n},A(\cdot +it),x\right)-L^k(\alpha,A(\cdot+it))\right] \leq 0
\ee
If $L^k(\alpha,A)=-\infty$ then $L^k\left(\frac{p_n}{q_n},A(\cdot +it),x\right)$ converges to $-\infty$ for $n\to \infty$, uniformly
for all $x$.
\end{lemma}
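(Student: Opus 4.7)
The plan is to reduce to $k=1$ via exterior powers and then combine two inputs: (i) a uniform-on-the-strip upper bound for the $\alpha$-cocycle coming from Furman's theorem and Hartogs' lemma, and (ii) the algebraic identity relating high powers of the rational cocycle's period matrix to the spectral radius. I would first replace $A$ by $B:=\Lambda^k A$, so that $L^k(\alpha,A(\cdot+it))=L^1(\alpha,B(\cdot+it))$ and $L^k(p_n/q_n,A,x+it)=\frac{1}{q_n}\ln\rho\bigl(B_{q_n}(p_n/q_n,x+it)\bigr)$, reducing the claim to
\[
\frac{1}{q_n}\ln\rho\bigl(B_{q_n}(p_n/q_n,x+it)\bigr)\;\leq\;L^1(\alpha,B(\cdot+it))+o(1)
\]
uniformly in $x\in\R/\Z$ and $|t|\leq\delta$.

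The first main input is the uniform Furman-type bound: for every $\varepsilon>0$ there exists $N$ such that
\[
\frac{1}{n}\ln\|B_n(\alpha,x+it)\|\;\leq\;L^1(\alpha,B(\cdot+it))+\varepsilon
\]
for all $n\geq N$, $x\in\R/\Z$, $|t|\leq\delta$. Fiberwise in $t$ this is Furman's theorem applied to the subadditive sequence $\ln\|B_n(\alpha,\cdot+it)\|$ over the uniquely ergodic rotation by $\alpha$. Uniformity in $t$ is obtained by applying Hartogs' lemma to the subharmonic family $\phi_n(z)=\frac{1}{n}\ln\|B_n(\alpha,z)\|$ on the strip, using that the pointwise majorant $t\mapsto L^1(\alpha,B(\cdot+it))$ is convex (hence continuous) and that the $\phi_n$ are uniformly bounded above. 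The case $L^k(\alpha,A)=-\infty$ is handled analogously, the Hartogs bound becoming uniform decay to $-\infty$.

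The second main input exploits periodicity of the rational cocycle: since $B$ satisfies $B_{mq_n}(p_n/q_n,z)=B_{q_n}(p_n/q_n,z)^m$ and $\rho(M)\leq\|M^m\|^{1/m}$, one has, for any $m\geq 1$,
\[
\frac{1}{q_n}\ln\rho\bigl(B_{q_n}(p_n/q_n,x+it)\bigr)\;\leq\;\frac{1}{mq_n}\ln\|B_{mq_n}(p_n/q_n,x+it)\|.
\]
Comparing now with the $\alpha$-cocycle over $mq_n$ steps: the corresponding evaluation points $x+jp_n/q_n+it$ and $x+j\alpha+it$ differ by at most $j/(q_nq_{n+1})\leq m/q_{n+1}$, and a Cauchy estimate on $B$ (analytic on the strip) controls the difference of the individual factors by $O(j/(q_nq_{n+1}))$. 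A telescoping identity combined with the bound from the second step would then yield the required inequality.

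The main obstacle is the quantitative mismatch in this comparison: the naive telescoping produces an additive error of order $\sup\|B\|^{mq_n}\cdot m^2 q_n/q_{n+1}$, which, whenever $L^1(\alpha,B(\cdot+it))<\ln\sup\|B\|$ and $\alpha$ is merely Diophantine, dominates the target bound $\exp\bigl(mq_n(L^1+\varepsilon)\bigr)$ no matter how $m=m_n$ is chosen. To circumvent this I would invoke Lemma~\ref{lemma-dominated}: for almost every $|t|\leq\delta$ the cocycle $(\alpha,B(\cdot+it))$ is $1$-dominated, so that by the robustness of the conefield criterion the cocycle $(p_n/q_n,B(\cdot+it))$ is also $1$-dominated for $n$ large, with invariant splittings $O(1/q_{n+1})$-close to those of the irrational cocycle; this directly gives, uniformly in $x$, the bound $L^1(p_n/q_n,B,x+it)=L^1(\alpha,B(\cdot+it))+o(1)$ for such $t$. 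The remaining null set of $t$ is then covered by convexity and continuity of $t\mapsto L^1(\alpha,B(\cdot+it))$ and upper-semicontinuity of $t\mapsto\sup_x\frac{1}{q_n}\ln\rho(B_{q_n}(p_n/q_n,x+it))$, allowing us to promote the bound to all $|t|\leq\delta$ at the expense of an $o(1)$ error.
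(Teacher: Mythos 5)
Your proposal contains a genuine gap, and the route you take to circumvent your own "main obstacle" imports a hypothesis that the lemma does not assume.

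The paper's proof is elementary and avoids the obstacle entirely by working at a \emph{fixed} finite scale $m$, independent of $n$. By unique ergodicity of the rotation by $\alpha$ one gets, for a suitable fixed $m$, the uniform-in-$x$ bound $\frac{1}{m}\ln\|A_m(\alpha,x)\|<L+2\epsilon$; since $m$ is fixed, $(\beta,z)\mapsto A_m(\beta,z)$ is jointly continuous, so by compactness the same bound (with $3\epsilon$) persists for all frequencies $\beta$ near $\alpha$ and all $z=x+it$ with $|t|<\delta$. Applying this with $\beta=p_n/q_n$ for $n$ large and bootstrapping via submultiplicativity to arbitrary iterate lengths $Km+r$ gives the claim. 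There is no comparison between iterates of the rational and irrational cocycles at scale $mq_n$, hence no telescoping and no compounding error. Your proposal instead attempts exactly such a comparison, correctly notices that it fails, and then tries to repair it.

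The repair does not work. You invoke Lemma~\ref{lemma-dominated}, applied to $B=\Lambda^k A$, to get $1$-domination of $(\alpha,B(\cdot+it))$ for a.e.\ small $t$. But Lemma~\ref{lemma-dominated} requires the spectral gap $L_k(\alpha,A)>L_{k+1}(\alpha,A)$ (equivalently $L_1(\alpha,B)>L_2(\alpha,B)$), whereas Lemma~\ref{lem-approx} only assumes $L^k(\alpha,A)>-\infty$. When $L_k(\alpha,A)=L_{k+1}(\alpha,A)$ your step 5 has nothing to start from, and the argument collapses. Note also that Lemma~\ref{lem-approx} is used later (in the proof of Theorem~\ref{thm-iii}) precisely in situations where the gap may be absent, so this is not a removable corner case. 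A secondary issue is that ``robustness of the conefield criterion gives the bound $L^1(p_n/q_n,B,x+it)=L^1(\alpha,B(\cdot+it))+o(1)$'' is asserted rather than derived: closeness of the dominated splittings does not by itself give closeness of the growth rate along the unstable bundle, and supplying that step essentially reproduces the work the fixed-scale argument does directly. In short: your steps (i)--(iii) are sound ingredients, but the missing idea is to stop the iterate count at a fixed $m$ before perturbing the frequency, rather than iterating $q_n$ steps with the wrong rotation number and paying the exponential price.
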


\begin{proof}
 By taking exterior products we may just consider the case $k=1$.
Let $\Phi(t)=L_1(\alpha,A(\cdot+it))$. We first assume $\Phi(0)>-\infty$. 
By Theorem~\ref{thm-iii}, $\Phi(t)$ is piecewise affine (note that the proof of Theorem~\ref{thm-iii} depends on Lemma~\ref{lemma-dominated}
and \ref{lem-omega-int} which do not depend on this lemma). Take $\delta>0$ such that $\Phi(t)$ is affine on $[-\delta,0]$ and $[0,\delta]$ with a possible
corner at $0$.
Choose $n$ such that $\frac 1n\int_{\R/\Z} \ln \|A_n(\alpha,x+it) \|d x < \Phi(t)+\epsilon$ 
for $t\in\{-\delta,0,\delta\}$.
By unique ergodicity we get uniform upper bounds in the
ergodic theorem applied to $\ln \|A_n(\alpha,x+it) \|,$ so there
exists $j$ such that for all $x$ and $t\in\{-\delta,0,\delta\}$
$$
\frac 1{jn}
\displaystyle\sum_{k=0}^{j-1} \ln \|A_n(\alpha,x+kn\alpha+it)\|<\frac 1n\left(\int_{\R/\Z} \ln \|A_n(\alpha,x+it) \|d x +\epsilon\right)<\Phi(t)
+2\epsilon.
$$ 
Thus  for $m=jn$ we have by subadditivity,
$\frac1m \ln \|A_m(\alpha,x+it)\|-\Phi(t)<2\epsilon$  for any $x$ and $t\in\{-\delta,0,\delta\}$. 
By continuity and compactness we find $N>0$ such that for $n>N$,
$\frac1m \ln\|A_m(p_n/q_n,x+it)\|-\Phi(t)<3\epsilon$ for all $x$ and $t\in\{-\delta,0,\delta\}$. 
The left hand side is subharmonic for $t\in(-\delta,0)$ and $t\in(0,\delta)$. Therefore, by the maximum principle,
the last estimate holds for all $|t|\leq \delta$.
By subadditivity, for $K$ large enough and any $r=0,\ldots,m-1$ one has
uniformly for $|t|\leq\delta$
$$
\frac{1}{Km+r}\ln\|A_{Km+r}(p_n/q_n,x)\|\leq \frac{1}{Km+r}(Km(\Phi(t)+3\epsilon)+Cr) < \Phi(t)+4\epsilon\;.
$$
This proves the claim. If $\Phi(0)=-\infty$ then by continuity and convexity this happens for all $t$ where the 
holomorphic extension $A(x+it)$ is defined and we can change $\Phi(t)$ to $-1/\epsilon$ in the estimates.
\end{proof}

If the cocycle is $k$-regular, then one can approximate the Lyapunov exponent by using 
rational frequencies and any phase $x$. This is the main result in this section.

\begin{thm} \label{thm-approx}
Assume $L^k(\alpha,A)>-\infty$ and that $(\alpha,A)$ is $k$-regular.
Then one has uniformly for small $t$ and all $x\in\R/\Z$ 
\be \label {ii}
L^k\left(\frac{p_n}{q_n},A(\cdot+it),x\right)\;=\;
L^k\left(\alpha,A(\cdot+it)\right)\,+\, o(1)
\ee
\end{thm}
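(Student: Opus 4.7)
The plan is as follows. \textbf{First}, by replacing $A$ with $\Lambda^k A$, we reduce to $k=1$: indeed $L^k(\alpha,A)=L^1(\alpha,\Lambda^k A)$ and $(\alpha,A)$ is $k$-regular iff $(\alpha,\Lambda^k A)$ is $1$-regular. Write
\[
u_n(z)\;:=\;L^1(p_n/q_n,\Lambda^k A,z)\;=\;\tfrac{1}{q_n}\ln\rho\bigl(\Lambda^k A_{q_n}(p_n/q_n,z)\bigr),
\]
a subharmonic function of $z=x+it$, with $x$-mean equal to $L^1(p_n/q_n,\Lambda^k A(\cdot+it))$. Lemma~\ref{lem-approx} provides the uniform upper bound $u_n(x+it)\leq L^1(\alpha,\Lambda^k A(\cdot+it))+o(1)$. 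By Theorem~\ref{thm-cont} (joint continuity at irrational $\alpha$), the $x$-mean converges uniformly in small $t$ to $L^1(\alpha,\Lambda^k A(\cdot+it))$. Combined with the uniform pointwise upper bound, this forces the nonnegative quantity $L^1(\alpha,\Lambda^k A(\cdot+it))-u_n(x+it)+o(1)$ to have $x$-mean $o(1)$, yielding uniform $L^1$-in-$x$ convergence to zero.

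\textbf{Second (main step)}: upgrade $L^1$-in-$x$ to uniform-in-$x$ convergence using $k$-regularity. By the regularity hypothesis, $L^1(\alpha,\Lambda^k A(\cdot+it))=L^k(\alpha,A)+2\pi\omega^k\, t$ is affine on $|t|<\delta_0$ and hence harmonic in $z$, so
\[
v_n(z)\;:=\;u_n(z)-L^k(\alpha,A)-2\pi\omega^k\,\im z
\]
is subharmonic on the strip $|\im z|<\delta_0$, satisfies $v_n\leq o(1)$ uniformly, and has $x$-mean $\to 0$ uniformly in $|t|\leq\delta_0/2$. I argue by contradiction: if, along a subsequence, there exist $z_n=x_n+it_n$ in a fixed compact sub-strip with $v_n(z_n)\leq-\eta$ for a fixed $\eta>0$, I normalize $v_n$ to a subharmonic $\psi_n$ satisfying $\psi_n\leq 1$ and $\underline\psi_n\geq 0$ (scaling by $(|\sup v_n|+\eta)^{-1}$ and shifting appropriately). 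The key structural input is that $u_n$, and hence $v_n$ and $\psi_n$, is $1/q_n$-periodic in $x$: cyclic conjugation of the product $\Lambda^k A_{q_n}(p_n/q_n,x)$ under $x\mapsto x+1/q_n$ preserves its spectral radius. Since the $\alpha$-orbit $\{x+k\alpha\}_{k=0}^{q_n+q_n'-1}$ approximates the full $1/q_n$-lattice to within $O(1/q_{n+1})$, choosing the starting $x$ on the $1/q_n$-coset of the dip $x_n$ forces every orbit point at height $t_n$ to land near a periodic translate of the dip, dropping the orbit-supremum of $\psi_n$ by at least $c\eta$ below $\underline\psi_n$. By subharmonic regularity of $v_n$ in a disk around $z_n$, this dip persists on a $t$-neighborhood of $t_n$ of measure bounded below as $n\to\infty$. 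The Brownian motion Lemma~\ref{lemma-2} applied to $\psi_n$ with the Diophantine data $(p_n,q_n,q_n')$ says the set $T$ of such ``bad'' $t$ satisfies $|T|\leq c_{q_n}\to 0$, contradicting the positive lower bound just obtained.

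\textbf{Main obstacle.} The delicate step is the propagation of the pointwise dip of $v_n$ at $z_n$ into a $t$-interval of positive measure independent of $n$. Pure subharmonicity is insufficient since $v_n$ admits arbitrarily narrow downward singularities — indeed $v_n$ may reach $-\infty$ at isolated nilpotency points of $\Lambda^k A_{q_n}(p_n/q_n,\cdot)$ in the singular-cocycle case. The argument must genuinely combine the $1/q_n$-periodicity of $u_n$ in $x$ (to replicate the single dip on a full lattice of dips) with the Brownian-motion spreading of Lemma~\ref{lemma-2} and with Cauchy-type control on the modulus of continuity of $v_n$ (to convert lattice-closeness of the $\alpha$-orbit into near-equality of $v_n$ values), in close analogy with the proof of Lemma~\ref{lemma-dominated} but now pushing the conclusion from an ``a.e.-$t$ domination'' statement to a uniform-in-$x$ convergence statement.
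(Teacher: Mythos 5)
The reduction to $k=1$ and the combination of Lemma~\ref{lem-approx} (uniform pointwise upper bound) with Theorem~\ref{thm-cont} (continuity of the $x$-average) to obtain $L^1$-in-$x$ convergence is correct, but it is only a preliminary observation; the whole content of the theorem is the passage to uniformity in $x$, and your outline does not close that step.

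The gap is at the claim ``By subharmonic regularity of $v_n$ in a disk around $z_n$, this dip persists on a $t$-neighborhood of $t_n$ of measure bounded below.'' Subharmonicity gives only the sub-mean-value inequality, which bounds the value at the center from above by averages; it places no lower constraint on $v_n$ at nearby points and does not prevent a narrow spike that recovers arbitrarily fast. Consequently a single dip $v_n(z_n)\leq -\eta$ produces no lower bound on the measure of the set $T$ of bad $t$, and no contradiction with Lemma~\ref{lemma-2} ensues. The $1/q_n$-periodicity in $x$ does replicate the dip on a lattice, but the $\alpha$-orbit only lands within $O(1/q_{n+1})$ of the lattice, and again subharmonicity gives no control on the increase of $v_n$ over that distance. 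The invocation of ``Cauchy-type control on the modulus of continuity of $v_n$'' is unjustified: $v_n$ is subharmonic, not holomorphic, and in the singular case $v_n$ can equal $-\infty$ at nilpotency points while taking values near zero arbitrarily close by.

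The paper's proof uses a completely different device to obtain the uniform lower bound $\frac{1}{q_n}\ln\rho(A_{q_n}(p_n/q_n,\cdot+it))\geq\Phi(t)+o(1)$, $|t|\leq\delta_0/2$. It trades the subharmonic quantity $\ln\rho$ for a genuinely holomorphic one via the dimensional inequality: there is $c_d>0$ such that for any $A_*\in\Ld$ some $1\leq k\leq d$ satisfies $|\tr A_*^k|^{1/k}\geq c_d\,\rho(A_*)$. With the maximizing power $k_n$ fixed, $z\mapsto\tr A_{q_n}(p_n/q_n,z)^{k_n}$ is holomorphic and $1/q_n$-periodic, hence has a Fourier expansion $\sum_j a_{j,n}e^{2\pi i j q_n z}$ with coefficients decaying like $e^{-2\pi|j|q_n\delta_1}$, so only finitely many modes $|j|\leq m_0$ matter uniformly in $n$. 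The function $\phi_n(t)=\max_x\frac{1}{k_nq_n}\ln|\tr A_{q_n}(p_n/q_n,x+it)^{k_n}|$ is then sandwiched between $\Phi(t)+o(1)$ from below (by the trace inequality plus Theorem~\ref{thm-cont} at $t=0$ and convexity) and from above (by Lemma~\ref{lem-approx}); since $\Phi$ is affine and $\phi_n$ is a maximum of finitely many affine functions, exactly one mode $j_n$ survives and dominates the others by a definite margin on $|t|\leq\delta_0/2$. This single-mode domination gives $\tr A_{q_n}(z)^{k_n}=(1+o(1))\,a_{j_n,n}e^{2\pi i j_n q_n z}$ pointwise, hence the desired uniform lower bound. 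In short: the paper's argument is Fourier-analytic and exploits holomorphy plus $1/q_n$-periodicity together, rather than trying to propagate a pointwise dip of a subharmonic function, and this is exactly the ingredient your outline is missing. I do not see a way to repair the Brownian-motion route without effectively rediscovering some version of this trace/Fourier mechanism.
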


\begin{proof}
Again by using exterior products it is enough to consider $k=1$.
Assume that $A$ admits a holomorphic extension to $|\im (x)|<\delta_1$ bounded
by $C>0$ and that $\Phi(t)=L_1(\alpha,A(\cdot+i t))$
is affine for $|t| \leq 
\delta_0<\delta_1$.  Up to multiplying $A$ by a sufficiently large
constant, we
may also assume that $\Phi(t)>1$ for $|t| \leq \delta_0$.  We are going to
show that
\begin{equation} \label {geq}
\frac 1{q_n}\ln\rho(A_{q_n}(p_n/q_n,\cdot +i t)) \geq \Phi(t)+o(1), \quad |t| \leq
\delta_0/2,
\end{equation}

This concludes, since (\ref {eq-est-rat}) can be rewritten as
$\frac 1{q_n}\ln\rho(A_{q_n}(p_n/q_n,\cdot +i t)) \leq \Phi(t)+o(1)$, so (\ref {geq})
implies $\frac 1{q_n}\ln\rho(A_{q_n}(p_n/q_n,\cdot +i t) )=\Phi(t)+o(1)$, which is just
(\ref {ii}) for $k=1$.

It is easy to see that there exists $c_d>0$
such that for any $A_* \in \LL(\C^d,\C^d)$
there exists $1 \leq k \leq d$ such that $|\tr A^k_*|^{1/k}
\geq c_d \rho(A_*)$.\footnote {Indeed,
by homogeneity and compactness, this inequality holds with
$c_d=\min \max_{1 \leq k \leq d} |\sum_{j=1}^d \lambda_j^k|$, where the
minimum runs over all sequences $\lambda_j \in \overline \D$, $1 \leq j \leq
d$, such that $\max_j |\lambda_j|=1$, and we just
have to check that $c_d>0$.  But if $c_d=0$ then there exists $\lambda_j \in
\C$, $1 \leq j \leq d$, not all zero, such that $\sum_j \lambda_j^k=0$ for
$1 \leq k \leq d$.  Let $J \subset \{1,...,d\}$ be the set of all $j$ such
that $\lambda_j \neq 0$.  Then letting $p(z)=\prod_{j \in J} (z-\lambda_j)$ we
have $p(\lambda_j)=0$ for each $j \in J$, while $\frac {1} {\# J}
\sum_{j \in J} p(\lambda_j)=p(0) \neq 0$
(since the contributions corresponding to each non-constant monomial
add up to zero), contradiction.}
Let $1 \leq k_n \leq d$ and $x \in \R/\Z$ be such that   
$|\tr A_{q_n}(p_n/q_n,x)^{k_n}|^{1/k_n}$ is maximal.  Let
$$
\phi_n(t)=\max_{x \in \R/\Z}
\frac {1} {k_n q_n} \ln |\tr A_{q_n}(p_n/q_n ,x+it)^{k_n}|.
$$
Then
$\phi_n(0) \geq L_1(p_n/q_n,A)+\frac
{1} {q_n} \ln c_d$, and using that $L_1(p_n/q_n,A)=L_1(\alpha,A)+o(1)$
(Theorem \ref {thm-cont}), we get
$\phi_n(0) \geq \Phi(0)+o(1)$.

On the other hand, by Lemma~\ref{lem-approx} we have $\phi_n(t) \leq \Phi(t)+o(1)$,
$|t| \leq \delta_0$.  Since $\phi_n(t)$ is clearly
a convex function of $t$, and $\Phi(t)$ is affine for $|t| \leq \delta_0$,
it follows that $\phi_n(t)=\Phi(t)+o(1)$ for $|t| \leq \delta_0$.

Write $\tr A_{q_n}(x)^{k_n}=\sum_{j \in \Z} a_{j,n} e^{2 \pi i j q_n x}$.
Then $|a_{j,n}| \leq d C^{q_n} e^{-2 \pi |j| q_n \delta_1}$.  Thus we can  
choose $m_0>0$ such that $\sum_{|j|>m_0} |a_{j,n}| e^{2 \pi| j|\ q_n \delta_0}
\leq 1$ for every $n$.  It follows that
$$
\phi_n(t)=\max_{|j| \leq m_0} \left ( \frac
{1} {k_n q_n} \ln |a_{j,n}|-\frac {2 \pi j} {k_n} t \right )+o(1), \quad |t|
\leq \delta_0,
$$
and since $\phi_n(t)=\Phi(t)+o(1)$ with $\Phi$ affine, we see that there
exist $|j_n| \leq m_0$ such that the slope of $\Phi$ is $\frac {-2\pi j_n} {k_n}$
and we have
\be
\phi_n(t)=\frac {1} {k_n q_n} \ln |a_{j_n,n}|-\frac {2 \pi j_n} {k_n} t+o(1),
\ee
while for each $|t| \leq \delta_0/2$ and $|j| \leq m_0$ we have
\be
\frac {1} {k_n q_n} \ln |a_{j,n}|-\frac {2 \pi j} {k_n} t\leq
\frac {1} {k_n q_n} \ln |a_{j_n,n}|-\frac {2 \pi j_n} {k_n} t-
\frac {\delta_0 \pi |j-j_n|} {k_n}+o(1).
\ee
It follows that
$$
\frac {\tr A_{q_n}(z,p_n/q_n)^{k_n}} {a_{j,n} e^{2 \pi i j q_n z}}=1+o(1),
\quad z=x+it,\quad |t| \leq \delta_0/2,
$$
so that
$$
\frac {1} {k_n q_n} \ln |\tr A_{q_n}(z,p_n/q_n)^{k_n}| \geq
\Phi(t)+o(1), \quad |t| \leq \delta_0/2.
$$
Thus $\frac {1} {q_n}
\ln \rho(A_{q_n}(p_n/q_n ,z)) \geq \Phi(t)+o(1)$ for $|t| \leq \delta_0/2,$ as
desired.
\end{proof}

\section{Holomorphic dependence and convergence}

In this section we will finally prove the main theorems.
In order to obtain the equivalence of regularity and domination as stated in Theorem~\ref{thm-i} we will argue
with approximation of the unstable and stable directions by rational frequencies and convergence of
holomorphic functions.
As before, $G(k,d)$ denotes the Grassmannian of $k$-dimensional subspaces of
$\C^d$. As described in Appendix~\ref{app} this is a holomorphic manifold.
An important fact is the holomorphic dependence of dominated splittings:

\begin{thm}\label{holomorphic}
Let $(\alpha,A(\cdot+it))$ be $k$-dominated for $t\in(t_-,t_+)$ and let $u(x+it)\oplus 
s(x+it)$ be the corresponding dominated splitting.
Then $z\mapsto u(z)\in G(k,d)$ and $z\mapsto s(z)\in G(d-k,d)$ are holomorphic 
for $z=x+it, \,t\in(t_-,t_+)$.
\end{thm}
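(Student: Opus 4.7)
The plan is to set up a holomorphic graph-transform / fixed-point argument using a complex invariant conefield; from this, holomorphy of $u$ follows by iteration and Weierstrass, and that of $s$ by duality.

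First, I would reduce to the case $k=1$. Replacing $A$ by $\Lambda^k A$ on $\Lambda^k\C^d$, $k$-domination becomes $1$-domination, and the Pl\"ucker embedding $G(k,d)\hookrightarrow\P(\Lambda^k\C^d)$ (cf.~Appendix~\ref{app}) is holomorphic and carries $u(z)$ onto the unstable line of $\Lambda^k A$ at $z$. So it suffices to prove holomorphy of $u(z)\in\P^{d-1}$ when $k=1$. Next I would extend the invariant conefield to a complex neighborhood. Fix a compact $K\ssubset(t_-,t_+)$. The conefield criterion recalled before Lemma~\ref{lemma-domination} provides, uniformly for $t\in K$, an integer $N$ and a $1$-conefield $\WW\subset(\R/\Z)\times\P^{d-1}$ strictly mapped into itself by $A_N(\cdot+it)$. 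Because $A$ is holomorphic on the strip and the conefield condition is open, this extends to an open $\WW_\C\subset\Om\times\P^{d-1}$, with $\Om:=\{z:\dist(\im z,K)<\eta\}$ for some $\eta>0$, whose fibers $(\WW_\C)_z$ are bounded domains in an affine chart of $\P^{d-1}$ and satisfy
\be
A_N(z)\cdot\overline{(\WW_\C)_z}\,\ssubset\,(\WW_\C)_{z+N\alpha}
\ee
uniformly in $z\in\Om$.

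Next I would iterate to obtain $u$ as a uniform holomorphic limit. Fix $z_0\in\Om$. For $z$ in a small complex neighborhood $V$ of $z_0$ and every $j\geq 0$, the point $u(z_0-jN\alpha)\in\P^{d-1}$ lies in $(\WW_\C)_{z-jN\alpha}$: $u(z_0-jN\alpha)$ sits in the closed real cone, the real cone is uniformly interior to the complex one, and choosing $V$ small absorbs the $z-z_0$ perturbation uniformly in $j$. Setting
\be
u_j(z) := A_{jN}(z-jN\alpha)\cdot u(z_0-jN\alpha),
\ee
each $u_j$ is a holomorphic map $V\to\P^{d-1}$, because $A_{jN}(\cdot-jN\alpha)$ is entire in $z$, the input is a fixed point of $\P^{d-1}$, and the image stays inside the conefield and hence never degenerates. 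The strict containment above makes the fiberwise action of $A_N(z)$ a uniform contraction in the Kobayashi metric of $(\WW_\C)_{z+N\alpha}$ (a genuine metric, since the fibers are bounded hyperbolic domains), so $u_j(z)\to u(z)$ uniformly on $V$. Weierstrass's theorem then gives holomorphy of $u$ at $z_0$, and since $z_0$, $K$, and $\eta$ were arbitrary, $u$ is holomorphic on $\{z:\im z\in(t_-,t_+)\}$.

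For $s$ I would invoke duality: in the invertible case, $\tilde A(x):=A(x)^{-T}$ is a $k$-dominated cocycle on $(\C^d)^*$ over the same rotation $\alpha$, with unstable $k$-plane at $z$ equal to the annihilator $s(z)^\perp$; the argument above applied to $\tilde A$ makes $s(z)^\perp$ holomorphic, whence $s(z)$ is holomorphic via the annihilator isomorphism $G(k,(\C^d)^*)\cong G(d-k,\C^d)$. The singular case follows either by approximation through invertible cocycles (using robustness of dominated splittings together with the fact that uniform-on-compact limits of holomorphic maps are holomorphic), or by a symmetric graph-transform argument directly in $G(d-k,d)$. I expect the main technical obstacle to be the complex-conefield construction in the first paragraph: one must verify that the real contraction estimate survives complexification with a uniform, quantitatively strong rate so as to produce a genuine Kobayashi contraction. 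After that, the iteration and dualization arguments are routine complex analysis.
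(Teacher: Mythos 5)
Your argument for the unstable bundle $u$ is essentially the paper's: reduce to $k=1$ via exterior powers and the Pl\"ucker embedding, produce a conefield on which some iterate acts strictly, iterate a fixed reference section and pass to the limit by a normal-family argument (the paper uses Montel for maps into a hemisphere of $\P\C^d$; your Kobayashi-contraction phrasing is a cosmetic variant of the same idea). Your treatment of $s$ in the invertible case by dualizing to $A^{-T}$ matches the paper's remark that, for invertible $A$, $s$ can be handled via the inverse cocycle.

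The gap is in your treatment of $s$ when $A$ may be singular, and it is not merely technical. Your option (a), approximating by invertible cocycles, cannot work in this category: $\det A(z)$ is holomorphic on the strip, so an interior zero is stable under small perturbations by Rouch\'e. Unlike the continuous setting, the set of singular analytic cocycles has nonempty interior in $C^\omega_\delta(\R/\Z,\Ld)$, so a singular cocycle dominated on a band generally has no invertible neighbours. Your option (b), a ``symmetric'' graph transform in $G(d-k,d)$, also fails to go through as stated: in the dominated singular case $\ker A(z)\subset s(z)$, so forward application of $A(z)$ to $(d-k)$-planes near $s(z)$ drops dimension and is not a self-map of $G(d-k,d)$, while the pullback map $W\mapsto A(z)^{-1}(W)$ is not holomorphic where $\rank A(z)$ fails to be locally constant (which is precisely what happens at a zero of $\det A$). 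The paper circumvents exactly this difficulty by a different route: for \emph{rational} $\alpha=p/q$ the stable direction is the sum of generalized eigenspaces of $A_q(z)$ for the small eigenvalues, which is holomorphic by analytic perturbation theory even when $A_q(z)$ is singular; then for irrational $\alpha$ one approximates along continued-fraction denominators $p_n/q_n$, uses a uniform transversality bound (Lemma~\ref{lem-uniform}) to get a normal family of holomorphic sections $s_n$, and extracts a holomorphic limit. You would need to import something equivalent to handle the singular case.
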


We first consider just the more unstable directions in the dominated splitting 
and start with an analogue to Lemma~2.1 in \cite{Av2} showing holomorphic dependence.
This means in the considered splitting $\C^d=u(x)\oplus s(x)$ we assume that for some $n$, 
any $x$ and any unit vectors $w\in u(x),\,v\in s(x)$ we have
$\|A_n(x) w \|> \|A_n(x)v\|$.
As a corollary we will obtain Theorem~\ref{holomorphic} for rational frequencies. The holomorphic
dependence of $s(z)$ for irrational frequencies will be concluded in the proof of 
Theorem~\ref{thm-i}.\footnote{If $A(z)$ is always invertible, then the holomorphic dependence
of $s(z)$ follows directly from Lemma~\ref{lem-holomorph} by considering the inverse cocycle, but the singular case 
requires approximation by rational frequencies.}

\begin{lemma}\label{lem-holomorph}
Let $\DD\OO_k(\alpha,\C^d)$ denote the set of  $k-$dominated analytic cocycles on $\C^d$ with frequency $\alpha$.
For any $x\in\R/\Z$ the map $A\mapsto u_A(x)$ is a holomorphic function 
of $A\in \DD\OO_k(\alpha,\C^d)$. Here, $u_A(x)$ denotes the corresponding unstable subspace.
\end{lemma}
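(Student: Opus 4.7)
The plan is to realize $u_A(x)$ as a uniform limit of explicit holomorphic functions of $A$ and then invoke the holomorphic analogue of Weierstrass: a uniform limit of holomorphic maps into a complex manifold (here $G(k,d)$, equipped with the holomorphic structure recalled in Appendix~\ref{app}) is holomorphic. The point is that, because of $k$-domination, forward iteration on the Grassmannian exponentially contracts onto the unstable direction, so the standard pull-back-forward construction converges uniformly over a neighborhood in $\DD\OO_k(\alpha,\C^d)$.

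More concretely, I would fix $A_0\in\DD\OO_k(\alpha,\C^d)$ with dominated splitting $\C^d=u_{A_0}(y)\oplus s_{A_0}(y)$ and exploit the robustness of domination (conefield criterion in Section~3) to pick an open neighborhood $\UU$ of $A_0$ and a $k$-conefield $U\subset\R/\Z\times G(k,d)$ that is strictly invariant for every $A\in\UU$: there exists $n_0\geq 1$ such that for all $A\in\UU$ and $(y,w)\in\overline U$ one has $(y+n_0\alpha,A_{n_0}(y)\cdot w)\in U$, with $w$ transverse to $\ker A_{n_0}(y)$. Choose a fixed reference subspace $V_0\in G(k,d)$ with $(x-N\alpha,V_0)\in U$ for all sufficiently large $N$ (say, $V_0=u_{A_0}(x-N\alpha)$ for one large $N$, which then lies in $U$ for all nearby $A$), and define for $n\geq N$
\[
F_n(A)\;=\;A_n(x-n\alpha)\cdot V_0\;\in\;G(k,d).
\]
Since the action of an invertible matrix on the Grassmannian is given in coordinates by a rational map without poles on the complement of the kernel locus, and the conefield ensures that the iterates stay away from this locus, each $F_n$ is a holomorphic map $\UU\to G(k,d)$.

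The main step is uniform convergence $F_n\to u_{A}(x)$ on $\UU$. I would work in a single Grassmannian chart adapted to the splitting at $A_0$: parametrize a neighborhood of $u_{A_0}(x)$ in $G(k,d)$ by linear maps $\phi\in\Hom(u_{A_0}(x),s_{A_0}(x))$, and similarly at each pulled-back base point. In these charts, the assumption $\|A_{n_0}(y)w\|>\|A_{n_0}(y)v\|$ for unit $w\in u(y)$, $v\in s(y)$, together with the strict invariance of $U$, translates into a uniform contraction
\[
\|\phi_{n_0}\|\;\leq\;\lambda\|\phi_0\|,\qquad \lambda<1,
\]
for the graph coordinate, where $\lambda$ can be chosen uniformly in $A\in\UU$ by shrinking $\UU$. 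Iterating gives $\dist(F_n(A),u_A(x))\leq C\lambda^{n/n_0}$ uniformly on $\UU$, whence $F_n\to u_A(\cdot)$ uniformly.

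The genuine obstacle is precisely this contraction estimate: one must verify that the quantitative domination used in Lemma~\ref{lemma-domination} (inequalities on $\sigma_1/\sigma_2$) passes to a uniform Lipschitz contraction in the chosen Grassmannian charts, uniformly for $A$ in $\UU$. This is a routine but careful computation using that, in an orthonormal frame realizing the singular value decomposition of $A_{n_0}(y)$, the Grassmannian action on graphs $\phi$ is the rational map $\phi\mapsto (C+D\phi)(A+B\phi)^{-1}$ with $A$ dominant and $D$ sub-dominant (here $A,B,C,D$ are the block components of $A_{n_0}(y)$); its derivative at $\phi=0$ has norm controlled by $\sigma_{k+1}/\sigma_k\leq\rho$, which is $<1$ uniformly on $\UU$. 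Once this is in place, holomorphicity of $A\mapsto u_A(x)$ on $\UU$ follows, and since $A_0$ was arbitrary, on all of $\DD\OO_k(\alpha,\C^d)$.
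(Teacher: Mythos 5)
Your strategy is correct and shares the paper's basic architecture — push a reference subspace forward through the cocycle and exhibit $u_A(x)$ as a limit of holomorphic functions of $A$ — but the two proofs diverge at the crucial step of showing the limit is holomorphic. The paper reduces to $k=1$ via exterior powers, defines $u^k_{A'}(x)=A'_{kn}(x-kn\alpha)\cdot u_A(x-kn\alpha)$, observes that the conefield forces all these holomorphic functions to take values in a fixed open hemisphere of $\P\C^d$ (hence a bounded set in an affine chart), and then invokes Montel/Vitali: local boundedness plus pointwise convergence gives a holomorphic limit, with no need to quantify the rate. You instead aim for a \emph{uniform} contraction $\|\phi_{n_0}\|\le\lambda\|\phi_0\|$ in graph coordinates, from which uniform convergence and Weierstrass follow. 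Your route is more demanding because the contraction in a fixed graph chart requires carefully tracking the change of SVD frame from step to step (the blocks $B,C$ in $\phi\mapsto(C+D\phi)(A+B\phi)^{-1}$ are zero only if you update the frame each iteration, and the new graph coordinate must then be re-expressed relative to the next frame before iterating), which is precisely the work you acknowledge but do not carry out. The paper's normal-family argument buys you out of all of that: you only need to know the iterates stay inside a hemisphere, which the cone invariance gives for free. Your version, once completed, would yield the sharper byproduct of an explicit exponential rate of convergence, but for the stated lemma the qualitative Montel argument is the more economical choice. One small point to keep in mind: you appeal to ``uniform limit of holomorphic maps into a complex manifold is holomorphic''; this is sound only once you know the limit lands in a single chart (here guaranteed by the hemisphere/cone containment), so that claim ultimately rests on the same cone containment the paper uses.
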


In particular, an immediate corollary is
\begin{cor}
\begin{enumerate}[(i)]
\item the unstable subspace 
$u(x+it)\in G(k,d)$ depends holomorphically on $x+it$;
\item if $\alpha\in\Q$ is rational, then the stable subspace $s(x+it)$ depends holomorphically on $x+it$.
\end{enumerate}

\end{cor}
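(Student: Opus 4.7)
The plan is to deduce both parts from already-established ingredients: (i) a translation trick reducing holomorphy in the base variable $z$ to holomorphy in the cocycle (supplied by Lemma~\ref{lem-holomorph}), and (ii) for rational $\alpha$, the finite-period structure of the dynamics, which makes $s(z)$ accessible through a Riesz spectral projection of the return map.

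For part (i), I fix $A$ and consider the family $A_z(\cdot) := A(\cdot + z)$ parametrized by a complex variable $z$. Since $A$ extends holomorphically through $|\im w| < \epsilon_0$, the shift $z \mapsto A_z$ is holomorphic from a suitable horizontal strip into the Banach space $C^\omega_{\delta}(\R/\Z, \Ld)$ of analytic cocycles (for any $\delta$ smaller than $\epsilon_0 - |\im z|$). Whenever $\im z \in (t_-, t_+)$, the shifted cocycle $A_z$ is a real shift of the $k$-dominated cocycle $A(\cdot + i\im z)$ and therefore remains $k$-dominated, so $A_z \in \DD\OO_k(\alpha, \C^d)$. By translation-invariance of the dominated-splitting construction, the unstable subspace of $A_z$ at phase $0$ is precisely $u(z)$. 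Composing the holomorphic shift $z \mapsto A_z$ with the map $B \mapsto u_B(0)$, which is holomorphic on $\DD\OO_k(\alpha, \C^d)$ by Lemma~\ref{lem-holomorph}, shows that $z \mapsto u(z)$ is holomorphic.

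For part (ii), let $\alpha = p/q \in \Q$. The base dynamics is then periodic of period $q$ and the return map $A_q(z)$ is a holomorphic $\Ld$-valued function of $z$. The $k$-domination of $(\alpha, A(\cdot + it))$ for $t \in (t_-, t_+)$ is equivalent to a strict modulus gap between the $k$-th and $(k+1)$-th eigenvalues of $A_q(x+it)$, and $u(z)$, $s(z)$ coincide with the sums of generalized eigenspaces of $A_q(z)$ corresponding to eigenvalues above and below the gap. By continuity, in a neighborhood of any $z_0$ one can choose a fixed Jordan contour $\Gamma \subset \C$ uniformly separating the two groups of eigenvalues, and define the Riesz spectral projection
\be
P(z) := \frac{1}{2\pi i} \oint_\Gamma (\lambda I - A_q(z))^{-1} \, d\lambda,
\ee
which depends holomorphically on $z$ and has constant rank $k$. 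Since the range and kernel of a holomorphic family of projections of constant rank form holomorphic maps into the corresponding Grassmannians, we obtain $s(z) = \ker P(z)$ (and also $u(z) = \operatorname{range} P(z)$) holomorphic in $z$.

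The only point I anticipate requiring explicit justification is the identification, in the rational case, of the summands of the $k$-dominated splitting with the spans of generalized eigenspaces of $A_q(x)$ determined by the modulus gap. This is immediate from the definition of dominated decomposition applied to the single endomorphism $A_q(x)$ on the invariant splitting $u(x) \oplus s(x)$: iterated growth rates under $A_q(x)$ separate the moduli of the eigenvalues on $u(x)$ and $s(x)$, forcing the claimed identification. With this in place, the rest is standard holomorphic functional calculus.
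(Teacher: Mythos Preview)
Your proof is correct. For part (i) your translation trick $A_z(\cdot)=A(\cdot+z)$ composed with Lemma~\ref{lem-holomorph} is exactly the paper's argument (the paper writes it as $A'_{\Delta z}(z)=A(z+\Delta z)$ and $u_{A'_{\Delta z}}(z)=u_A(z+\Delta z)$).

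For part (ii) you take a genuinely different route. The paper stays internal to its own machinery: it first notes that Lemma~\ref{lem-holomorph} with $\alpha=0$ already gives holomorphy of the dominant eigendirection of a holomorphic matrix-valued function with a top gap, and then bootstraps to arbitrary spectral groupings via exterior powers and regularized inverses $(\Lambda^k B(z)+\epsilon\mathbf{1})^{-1}$, finally identifying $s(z)$ with the appropriate sum of generalized eigenspaces of $A_q(z)$. You instead go straight to the Riesz spectral projection $P(z)=\frac{1}{2\pi i}\oint_\Gamma(\lambda I-A_q(z))^{-1}\,d\lambda$ around a contour separating the two eigenvalue groups. Your approach is the more standard and direct one, and it handles possible singularity of $A_q(z)$ without any regularization trick; the paper's approach has the merit of being self-contained (it reuses Lemma~\ref{lem-holomorph} rather than importing holomorphic functional calculus). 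The identification of the dominated splitting with the modulus-ordered generalized eigenspace decomposition of the return map, which you flag, is needed in both arguments and is indeed immediate from the definition.
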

\begin{proof}
 Holomorphic dependence of $u_1 \wedge u_2 \wedge \ldots \wedge u_k \, \in \P(\Lambda^k \C^d)$ implies holomorphic dependence of
the subspace spanned by $u_1,\ldots,u_k$. In fact, $G(k,d)$ can be considered as a closed 
submanifold\footnote{being precisely those elements that can be written as $v_1\wedge v_2 \wedge \ldots\wedge v_k$} 
of the projective space
$\P(\Lambda^k \C^d)$. Therefore, we may consider $\Lambda^k A$ and can assume $k=1$.
Now let $\epsilon_0$ be the infimum of the distance between $u_A(x)$ and unit vectors in $s_A(x)$.
Let $0<\epsilon<\epsilon_0/2$ and consider the conefield
$U=\{x,m\},\,m\in \P\C^d$ such that $m$ is $\epsilon$ close to
$u(x)$. Here we use the spherical metric on $\P\C^d.$ Note that $A$ acts
on the  $\P\C^d$ in a natural way.
Take $n$ large enough such that $(x+n\alpha,A_n(x)\cdot m)\in U$ for every $(x,m)\in\overline U$.
Let $\VV\subset \DD\OO_1(\alpha,\C^d)$ be the set of all $(\alpha,A')$ such that
$(x+n\alpha,A_n(x)\cdot m)\in U$ for every $(x,m)\in\overline{U}$. $\VV$ is an open neighborhood of $A$ and for $A'\in\VV$ we find
that $u_{A'}(x)$ is the limit $k\to\infty$ of $u^k_{A'}(x)=A'_{kn}(x-kn\alpha)\cdot u_A(x-kn\alpha)$. For each $k\geq 1$ this is a holomorphic function of $A'$
taking values in the hemisphere of $\P\C^d$ centered at $u_A(x)$. By Montel's Theorem, the limiting function
$A'\mapsto u_{A'}(x)$ is holomorphic.

Part (i) of the corollary follows by holomorphy in $\Delta z$ for 
$A'_{\Delta z}(z)=A(z+\Delta z)$. Then $u_{A'_{\Delta
    z}}(z)=u_A(z+\Delta z)$.

For part (ii) first note that taking $\alpha=0$ shows that the eigenvector corresponding to the largest modulus of the eigenvalues 
of a holomorphic matrix valued function $B(z)$ with a gap between the
largest and second largest eigenvalues depends holomorphically on $z$.
Using tensor products and inverses $(\Lambda^k B(z)+\epsilon \mathbf{1})^{-1}$ we find that the direct sums of generalized 
eigenspaces\footnote{The generalized eigenspace for a $d\times d$ matrix $B$ to the eigenvalue $\lambda$ is the kernel of 
$(B-\lambda)^d$.}
(corresponding to Jordan blocks)
of eigenvalues of modulus greater or smaller than a constant $c$ depend also holomorphically on $z$ in a neighborhood
where no eigenvalue has modulus $c$.
For rational $\alpha=\frac{p}{q}$, the subspace $s(z)$ is locally characterized as such a subspace, where $c$ is between the 
$k$-th and $k+1$st largest modulus of eigenvalues of $A_q(z)$.
\end{proof}

Using the analyticity of $u$ we obtain the following.

\begin{lemma}\label{lem-omega-int}
If $(\alpha,A)$ is $k$-dominated then $\omega^k$ is a constant integer in a
neighborhood of $(\alpha,A)$.
Moreover, if $\det A(x)\neq 0$ for all $x$, then $\omega^d$ is a constant integer
in a neighborhood of $(\alpha,A)$. 
\end{lemma}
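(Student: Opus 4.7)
The plan is to compute $L^k(\alpha,A(\cdot+it))$ in terms of a $\Z$-periodic, non-vanishing holomorphic function on the strip obtained by expressing $A|_u$ in a holomorphic frame of the unstable bundle, and then to identify $2\pi\omega^k$ with (minus) the winding number of that function around the base circle, an integer that depends continuously on $(\alpha',A')$.

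First, by openness of $k$-domination pick a neighborhood $\UU$ of $(\alpha,A)$ in $\R\times C^\omega_{\delta}(\R/\Z,\Ld)$ on which $k$-domination persists, and shrink $\delta$ so that Theorem~\ref{holomorphic} supplies a holomorphic unstable subbundle $u_{A'}(z)\subset\C^d$ on $\{|\im z|<\delta\}$ for every $(\alpha',A')\in\UU$. Lifting to the universal cover of the annulus and using Lemma~\ref{lem-holomorph}, choose a holomorphic frame $U_{A'}(z)$ depending continuously on $(\alpha',A')$ with monodromy $U_{A'}(z+1)=U_{A'}(z)\,T_{A'}$. Writing $A'(z)U_{A'}(z)=U_{A'}(z+\alpha')M_{A'}(z)$ defines a holomorphic $k\times k$ matrix $M_{A'}(z)$, invertible by $k$-domination, and the relation $M_{A'}(z+1)=T_{A'}^{-1}M_{A'}(z)T_{A'}$ forces $f_{A'}(z):=\det M_{A'}(z)$ to be $\Z$-periodic and non-vanishing on the strip.

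Next, telescoping
$$
\Lambda^k A'_n(z)\,\Lambda^k U_{A'}(z)\,=\,\Bigl(\prod_{j=0}^{n-1}f_{A'}(z+j\alpha')\Bigr)\,\Lambda^k U_{A'}(z+n\alpha'),
$$
combined with boundedness of $\|\Lambda^k U_{A'}\|$ on compact sub-strips and unique ergodicity of the rotation, yields
$$
L^k(\alpha',A'(\cdot+it))\,=\,\int_0^1\ln|f_{A'}(x+it)|\,dx \qquad \text{for small }|t|.
$$
Since $f_{A'}$ is non-vanishing, $\Z$-periodic and holomorphic, its logarithm is multivalued with a constant jump under $z\mapsto z+1$; write $\log f_{A'}(z)=2\pi i\,n(\alpha',A')\,z+g_{A'}(z)$ with $n(\alpha',A')\in\Z$ (the winding of $f_{A'}$) and $g_{A'}$ a $\Z$-periodic holomorphic function. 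Taking real parts and integrating in $x$ kills every non-zero Fourier mode of $g_{A'}$, giving
$$
L^k(\alpha',A'(\cdot+it))\,=\,-2\pi\,n(\alpha',A')\,t\,+\,\mathrm{Re}\,\hat g_{A'}(0),
$$
whence $\omega^k(\alpha',A')=-n(\alpha',A')\in\Z$.

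Finally, continuous dependence of $U_{A'}$, hence of $f_{A'}$ in the sup norm on the circle $\{\im z=t_0\}$ for a fixed small $t_0$, makes the integer-valued function $(\alpha',A')\mapsto n(\alpha',A')$ continuous, hence locally constant; the winding is frame-independent since a gauge change $U_{A'}\mapsto U_{A'}S$ multiplies $f_{A'}$ by $(\det S(\cdot+\alpha'))^{-1}\det S$, whose net winding around $\R/\Z$ is zero. For the addendum concerning $\omega^d$ when $\det A\not\equiv 0$, no $k$-dominated decomposition is needed: take $f_{A'}(z)=\det A'(z)$ directly, which is holomorphic and (on a neighborhood where $\det A'\not\equiv 0$) non-vanishing on $\{\im z=t_0\}$ for generic small $t_0$ and stably so under perturbations, and apply the same winding argument. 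The delicate technical point throughout is the identification $L^k=\int\ln|f_{A'}|\,dx$ together with frame-invariance of the winding in the presence of a possibly non-trivial holomorphic unstable bundle; both are resolved by the facts that the bundle is trivial on the universal cover and that $\Z$-periodic gauge changes have vanishing net winding.
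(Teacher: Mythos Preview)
Your argument is essentially the paper's: both express $L^k$ as $\int_0^1\ln|f(x+it)|\,dx$ for a $\Z$-periodic non-vanishing holomorphic $f$ arising from the action on the unstable bundle, and then identify $\omega^k$ with minus the winding number of $f$, which is integer-valued and locally constant by continuous dependence.  The only technical difference is that the paper first passes to $\Lambda^k A$ to reduce to $k=1$ and then invokes the appendix (Theorem~A.1(vi)) to get a genuinely $\Z$-periodic lift $u(z)\in\C^{\binom{d}{k}}\setminus\{0\}$, so that $\Lambda^k A(z)u(z)=\lambda(z)u(z+\alpha)$ with $\lambda$ automatically periodic; this bypasses your universal-cover/monodromy bookkeeping, but after the exterior-product reduction your $f=\det M$ is exactly their $\lambda$.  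For $\omega^d$ the paper simply writes $L^d=\int\ln|\det A|$ and applies the same winding computation, which matches your addendum.
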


\begin{proof}
It is enough to consider the case $k=1$.
As in Appendix~\ref{app}, Theorem~\ref{holomorphic-G(k,d)}~(vi) we lift $u(z)\in \P\C^d$ 
to a one-periodic, holomorphic function 
$u(z)\in\C^d\setminus{0}$. Then $A(z)u(z)=\lambda(z) u(z+\alpha)$ for a one-periodic, 
holomorphic function $\lambda(z)$. 
Note, $u(z)$ and $\lambda(z)$ also depend holomorphically on $A$.
Thus, for $z=x+it$,
$L^1(\alpha,A(\cdot+it))=\int_0^1 \ln\|A(z)u(z)\|-\ln\|u(z)\|dx=
\int_0^1 \ln|\lambda(z)|\,dx$.
A direct computation (see e.g. \cite{jm2}) shows that $\omega^1(\alpha,A)=
\frac d{d\epsilon}\big| _{\epsilon = 0} \int_0^1 \ln |\lambda
(x+i\epsilon)| dx$  is minus  the winding number of
$\lambda(x)$ around $0$, so it is an integer and locally constant.
As $L^d(\alpha,A)=\int_0^1 \ln|\det A(x)|\,dx$, one obtains the same result for $\omega^d$
by the same argument.
\end{proof}


Before proving the main theorems we need another lemma that will guarantee the convergence
of the unstable and stable directions when approaching $\alpha$ by rationals.

\begin{lemma} \label{lem-uniform}
Let $D=\{z\in \C:t_-\leq \im\,z \leq t_+\}$ and let $u:D\to G(k,d),\,s:D\to G(d-k,d)$ be holomorphic functions on
the interior $\mathring{D}$ and continuous on $D$. 
Assume that $u$ is transverse to $s$ at every point and the angle
is minorated by $\epsilon$ at the boundary $\partial D$.  
Then it is minorated by $\epsilon$ in the whole strip. 
Moreover, for any compact subset $K\subset\mathring{D}$ of the open strip, $u$ and $s$ are $C$-Lipschitz
where $C$ depends only on $\epsilon$ and $K$.
\end{lemma}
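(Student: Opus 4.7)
I would measure the transversality between $u(z)$ and $s(z)$ by a Plücker-type expression and control it via the maximum principle. For $U\in G(k,d)$ and $S\in G(d-k,d)$, pick any nonzero lifts $\tilde U\in\Lambda^k\C^d$ and $\tilde S\in\Lambda^{d-k}\C^d$, fix once and for all an identification $\Lambda^d\C^d\cong\C$, and set
\begin{equation*}
\Theta(U,S)\;=\;\frac{|\tilde U\wedge\tilde S|}{\|\tilde U\|\,\|\tilde S\|}\;\in\;[0,1].
\end{equation*}
This quantity is independent of the choice of lifts, vanishes exactly on non-transverse pairs, and is uniformly comparable to the smallest principal angle between $U$ and $S$, so proving $\Theta(u(z),s(z))\ge\epsilon$ on $D$ is equivalent to the first conclusion.

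The core step is to check that $\phi(z):=-\log\Theta(u(z),s(z))$ is subharmonic on $\mathring D$. Locally one lifts $u,s$ to nonvanishing holomorphic maps $\tilde u(z)\in\Lambda^k\C^d$, $\tilde s(z)\in\Lambda^{d-k}\C^d$ (see Theorem~\ref{holomorphic-G(k,d)}(vi)), and writing $f(z):=\tilde u(z)\wedge\tilde s(z)$ one gets
\begin{equation*}
\phi(z)\;=\;\log\|\tilde u(z)\|+\log\|\tilde s(z)\|-\log|f(z)|.
\end{equation*}
The first two terms are plurisubharmonic, each having the form $\tfrac12\log\sum_I|h_I(z)|^2$ with $h_I$ holomorphic; and, crucially, since $u$ and $s$ are assumed transverse on all of $D$, $f$ is nonvanishing, so $-\log|f|$ is \emph{harmonic}. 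Hence $\phi$ is subharmonic, independently of the choice of local lifts.

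Next I would apply the maximum principle. In the intended applications $u,s$ are $1$-periodic in $\re z$ (as invariant sections of a $1$-periodic cocycle), so $\phi$ is $1$-periodic and, being continuous on $D$, bounded on $D$; a subharmonic function on such a periodic strip attains its supremum on the boundary, so the bound $\phi\le-\log\epsilon$ on $\partial D$ propagates to all of $D$, yielding $\Theta(u(z),s(z))\ge\epsilon$ throughout $D$.

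For the Lipschitz conclusion, fix a compact $K\subset\mathring D$ and let $\eta<\tfrac12\dist(K,\partial D)$. The first part gives $\Theta\ge\epsilon$ on the closed $\eta$-neighborhood $K'$ of $K$, so $(u,s)$ takes values on $K'$ in the compact set $K_\epsilon:=\{\Theta\ge\epsilon\}$, which lies inside the open manifold of transverse pairs in $G(k,d)\times G(d-k,d)$. Covering $K_\epsilon$ by finitely many holomorphic charts with precompact images, the pull-backs of $u$ and $s$ by these charts are holomorphic and uniformly bounded on every disk of radius $\eta$ around a point of $K$; Cauchy's integral formula then yields derivative bounds on $K$ depending only on $\epsilon$ and $\eta$, hence only on $\epsilon$ and $K$.

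The principal obstacle is the subharmonicity of $\phi$: the decomposition above crucially exploits the hypothesis that $f$ is nowhere zero, without which $-\log|f|$ would contribute superharmonic Dirac masses at zeros of $f$ and the whole argument would collapse. Once subharmonicity is in place the remaining steps---the maximum principle (after reducing to the periodic, essentially compact setting) and the Cauchy bound---are routine.
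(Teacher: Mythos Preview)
Your approach is correct and genuinely different from the paper's. The paper does not use Plücker coordinates; instead it considers the oblique projection $P(z)$ onto $u(z)$ along $s(z)$, shows $P$ is holomorphic by locally lifting $(u,s)$ to $B\in\GL(d,\C)$ and writing $P=BP_kB^{-1}$, and then uses that $z\mapsto\|P(z)\|$ is subharmonic (as a supremum of norms of holomorphic vector-valued functions) together with the identity $\|P\|=1/\sin\theta_{\min}$. The maximum principle then transfers the boundary bound on $\|P\|$, and hence on $\theta_{\min}$, to the interior with the \emph{same} constant $\epsilon$. For the Lipschitz part the paper bounds $\partial P$ by Cauchy and recovers $u$ as the span of $P(z)w_j$ for a fixed orthonormal frame $w_1,\dots,w_k$ of $u(z_0)$.

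Your subharmonic quantity $-\log\Theta$ works equally well for the maximum principle step, and your decomposition $\phi=\log\|\tilde u\|+\log\|\tilde s\|-\log|f|$ with $f$ nonvanishing is a clean way to see subharmonicity. One small inaccuracy: $\Theta$ equals the \emph{product} $\prod_i\sin\theta_i$ of the sines of the principal angles, so it is not uniformly comparable to $\sin\theta_{\min}$ (only $\Theta\le\sin\theta_{\min}\le\Theta^{1/k}$); thus your argument yields an interior lower bound depending on $\epsilon$ and $k$ rather than the exact $\epsilon$. This is harmless for the applications in the paper (only a uniform lower bound is ever used), but the paper's route via $\|P\|$ gives the sharper statement as written. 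You are also right that both arguments silently use $1$-periodicity in $\re z$ to justify the maximum principle on the strip; the paper does not flag this, but it is implicit there too.
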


\begin{proof} 
Let $P$ be the projection on $u$ along $s$, i.e. $P$ is the unique matrix
with $\ker P=s$ and $P|u=\id|u$. 
By Theorem~\ref{holomorphic-G(k,d)}~(v) we can locally 
lift the pair $(u,s)$ to a holomorphic function $B \in \GL(d,\C)$ 
where the first $k$ vectors represent $u$ and the last $d-k$ column vectors represent $s$.
Then, $P=B P_k B^{-1}$ where $P_k$ projects on the first $k$ coordinates in $\C^k$ and hence, 
$P$ is holomorphic. 
Now, $\|P\|=\sup_{\|w\|=1} \|Pw\|$ is a decreasing function\footnote{In fact, the maximum of $\|Pw\|$ occurs if $w$ lies 
in the plane with the minimal angle and is perpendicular to
$s$. Moreover, $\|P\|=\frac1{\sin(\theta)},$ e.g. \cite{gk} }
of the angle $\theta$ between $u$ and $s$, going to $\infty$ if the angle goes to zero.
However, as $P$ is holomorphic, $\|P\|=\max_{\|w\|=1} \|Pw\|$ is maximized in $D$ on the boundary $\partial D$.

For the second part, note that by Cauchy's formula, the partial derivatives of $P$ at $z_0\in\mathring{D}$ are bounded
by $C/\dist(z_0,\partial D)$ for some constant $C$ only depending on $\epsilon$.
Now, choose an orthonormal basis $w_1,...,w_k$ for $u$ at $z_0$ (they are fixed, independent of $z$) and
consider the projections $Pw_j$ as one varies the base point $z$. 
Those are Lipschitz near $z_0$ and the space they
generate (which is $u$) depends in a Lipschitz way on $z$ near $z_0$.
Using the uniform bounds of $P$ and of its derivatives on compact sets $K\subset \mathring{D}$
we obtain a Lipschitz constant $C$ only depending on $K$ and $\epsilon$.
\end{proof}

Now we are ready to prove the main theorems.

\renewcommand{\proofname}{Proof of Theorem~\ref{thm-i}}

\begin{proof}

It is enough to consider the case $k=1$.
Let $L_1(\alpha,A)>L_2(\alpha,A)$ and let $(\alpha,A)$ be 1-regular.
By Lemma~\ref{lem-omega-int} it is only left to prove that regularity 
implies domination.

We let $\frac{p_n}{q_n}$ be rational approximants with $\frac{p_n}{q_n}\to\alpha$.
By Lemma~\ref{lem-approx}
uniformly in $x$ and $|t|<\epsilon$ we have $L_1(\frac{p_n}{q_n},A(\cdot+it),x)=L_1(\alpha,A(\cdot+it))+o(1)$ and
$L^2(\frac{p_n}{q_n},A(\cdot+it),x)\leq L^2(\alpha,A(\cdot+it))+o(1)$ if $L^2(\alpha,A)>-\infty$. 
If $L^2(\alpha,A)=-\infty$ then $L^2(\frac{p_n}{q_n},A(\cdot+it),x)$ approaches $-\infty$ 
uniformly in $x$ and $|t|<\epsilon$.
Therefore, either
$L_2(\frac{p_n}{q_n},A(\cdot+it),x) \leq L_2(\alpha,A(\cdot+it))+o(1)$ or it approaches $-\infty$
and it follows that for large $n$, 
$L_2(\frac{p_n}{q_n},A(\cdot+i t),x)<L_1(\frac{p_n}{q_n},A(\cdot+i t),x)$ for every
$x \in \R/\Z$ and every $|t|<\epsilon$.  
Thus, for $n$ large, $(\frac{p_n}{q_n},A(\cdot+it))$ is $1$-dominated through a band 
$|\im\, z|=|t|<\epsilon$.

Select $t_-<0<t_+$ in this band, such that $(\alpha,A(\cdot+t_\pm))$ is $1$-dominated.
By robustness of domination, the cocycles $(\frac{p_n}{q_n},A(\cdot+t_\pm))$ are
uniformly $1$-dominated. 

By Lemma~\ref{lem-holomorph} the unstable and stable subspaces $u_n(x+it),\,s_n(x+it)$ depend holomorphically on $z=x+it$
for $t$ in neighborhood of $\{z:t_-\leq \im\, z \leq t_+\}$. By Lemma~\ref{lem-uniform} for each $n$, 
the smallest angle occurs at some point $z$ at the boundary $\im\,z=t_\pm$.
But since the  cocycles $(\frac{p_n}{q_n},A(\cdot+t_\pm))$ are
uniformly $1$-dominated, we find a uniform, non-zero lower bound for the angle between 
$u_n(x+it)$ and $s_n(x+it)$. 
Again, by Lemma~\ref{lem-uniform} the functions $u_n$ and $s_n$ are uniformly Lipschitz on compact subsets of 
$\{z: \im\, z \in (t_-,t_+)\}$.
Therefore, there is a convergent subsequence such that $u_{n_k}$ and $s_{n_k}$ converge (uniformly on compacts) to holomorphic functions
$u$ and $s$, satisfying $A(z)s(z)=s(z+\alpha)$, $A(z)u(z)=u(z+\alpha)$. 
Since $L_2<L_1$, in the limit $\frac{p_n}{q_n}\to \alpha$ 
the one-dimensional bundle $u(z)$ is associated to the top Lyapunov
exponent almost everywhere and unique ergodicity implies
domination. 
\end{proof}

Note that the limits $u(z)$ and $s(z)$ are holomorphic functions
and therefore we also proved Theorem~\ref{holomorphic}.
Next, we show the quantization of the acceleration.

\renewcommand{\proofname}{Proof of Theorem~\ref{thm-iii}}

\begin{proof} We only need to consider the case $k<d$ and $L^k>-\infty$.
Assume that $L_k(\alpha,A(\cdot+it))-L_{k+1}(\alpha,A(\cdot+it))$ is not identically zero
on $t\in[0,\epsilon]$ for any $\epsilon>0$. 
Then using Lemma~\ref{lemma-dominated} one obtains
a sequence $t_n\to 0$ where $(\alpha,A(\cdot+i t_n))$ is $k$-dominated.  
At any such $t_n$, $\omega^k(\alpha,A(\cdot+it_n))$ is an integer 
by Lemma~\ref{lem-omega-int}. By convexity of $L^k$ in $t$, $\omega^k$ must be right-continuous and constant for $t\geq 0$ 
small, hence $\omega^k\in\Z$.

Consider the case $L_k(\alpha,A(\cdot+it)) =L_{k+1}(\alpha,A(\cdot+it))>-\infty$
for $t\geq 0$ small. 
Let $[a,b]$ be the maximal interval such that there exists $\epsilon>0$ with 
$L_j(\alpha,A(\cdot+it))=L_k(\alpha,A(\cdot+i t))$ for $a \leq j \leq b$ and 
for every $t\in[0,\epsilon)$.
Let us define $L^0=0$ and $\omega^0=0$. Then, by the arguments above or 
Lemma~\ref{lem-omega-int} (in case $b=d$) we have that $\omega^{a-1}$ and $\omega^b$ are integers.
Moreover, $L^k=L^{a-1}+(L^b-L^{a-1}) \frac {k-a+1}{b-a+1}$ 
for every $0\leq t<\epsilon$. Hence, $\omega^k=\omega^{a-1}+
(\omega^{b}-\omega^{a-1})\frac {k-a+1}{b-a+1} \in \frac {1}{b-a+1} \Z$.
As $\omega^{k-1}\in \frac{1}{b-a+1}\Z$ as well\footnote{This is clear for $k\geq a+1$ and if $k=a$ then one even has 
$\omega^{k-1}\in \Z$}, one also has $\omega_k=\omega^k-\omega^{k-1} \in \frac {1}{b-a+1} \Z$.
If $A(z)\in \SL(d,\C)$ for all $z$, then $l\omega_k,\,l\omega^k \in \Z$ for an integer $1\leq l\leq d-1$.\footnote{The case $b-a+1=d$
implies $a=1$, $b=d$ and hence $\omega_k=\frac1d \omega^d$. But if $\det(A(z))=1$ then $\omega^d=0$, 
and hence all $\omega^k,\,\omega_k$ are zero.}
\end{proof}

\renewcommand{\proofname}{Proof of Theorem~\ref{thm-ii}}

\begin{proof}
As a consequence it follows immediately that $L^k(\alpha,A(\cdot+it))$ is piecewise affine.
Hence, for $t\neq 0$ small enough, $L^k$ is affine in a neighborhood of $t$. By definition, this means
that $(\alpha,A(\cdot+it))$ is $k$-regular for $t\neq 0$ small enough which proves
Theorem~\ref{thm-ii}.
\end{proof}
 
Now we have everything to prove the main Theorem.

\renewcommand{\proofname}{Proof of Theorem~\ref{main}}

\begin{proof}
By Theorem~\ref{thm-cont}, the continuity of the Lyapunov exponents, there is an open and dense
subset  $U \subset C^\omega(\R/\Z,\Ld)$ such that for $A\in U$ the
number of distinct Lyapunov exponents is locally constant. Within $U$
the set where Oseledets filtration is dominated or trivial is automatically
open. By Theorem~\ref{thm-ii} the set of cocycles that are $k$-regular
for all $k$ with $L_k>-\infty$ is dense in $U$, and by Theorem~\ref{thm-i}
all such cocycles  with not all Lyapunov exponents equal have
dominated Oseledets splitting.   

\end{proof}

\renewcommand{\proofname}{Proof}

\appendix

\section{Holomorphic quotients, submersions and lifts \label{app}}

In this appendix we want to briefly explain the holomorphic structure of the Grassmannians
$G(k,d)$ and show the existence of local holomorphic lifts to representing matrices.

Let us define the following subgroup of $\GL(d)$.
\begin{equation}
 \GL(k,d):=\left\{ \begin{pmatrix}
  A & C \\ 0 & D
 \end{pmatrix}\,:\, A\in \GL(k),\,D\in \GL(d-k),\,C\in\C^{k\times (d-k)}
 \right\}\;.
\end{equation}
Furthermore, let $\MM_k(d)$ denote the set of $d\times k$ matrices of rank $k$.

\begin{thm} \label{holomorphic-G(k,d)}
\begin{enumerate}[{\rm (i)}]
\item
The Grassmannian $G(k,d)$ can be considered as the quotient 
$$G(k,d) \,\cong\, \GL(d)\,/\,\GL(k,d) \qquad \text{(left cosets $G\, \GL(k,d)$)\;.}$$ 
\item Let $p:\GL(d)\to G(k,d)$ be the natural projection.
Then, $G(k,d)$ has a unique holomorphic structure, 
such that $p$ is a holomorphic submersion (meaning $p'$ has full possible rank everywhere).
Moreover, the left action of $\GL(d)$ is holomorphic.
\item There is a natural projection $\tilde p:\MM_k(d)\to G(k,d)$ which is also a holomorphic submersion
\item Locally, for each $G \in \GL(d)$ and $M \in \MM_k(d)$ 
there exists neighborhoods $U_G$ of $p(G)$ and $U_M$ of $\tilde p(M)$ and
holomorphic injections
$i_G: U_G \to \GL(d)$, $i_M: U_M\to \MM_k(d)$
such that $p\circ i_G = \id|U_G$
and $\tilde p \circ i_M=\id|U_M$.
\item A holomorphic function $u:D\to G(k,d)$ can be locally lifted in a small neighborhood
$U_z$ of $z\in D$
to a holomorphic function $G:U_z\to \GL(d)$ or $B:U_z\to \MM_k(d)$ such that
$p\circ G=u$ or $\tilde p \circ B=u$, respectively.
\item An analytic function $u\in C^\omega(\R/\Z,G(k,d))$ can be lifted 
to a one-periodic holomorphic function 
 $\tilde u:D_\delta \to \MM_k(d)$ such that
$\tilde p\circ \tilde u=u$, for some $\delta>0$.
Here, $D_\delta=\{\im(z)<\delta\}$,
\end{enumerate}
\end{thm}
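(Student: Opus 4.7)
The plan is to build up the holomorphic structure from standard Lie-theoretic quotients in (i)--(iii), construct local holomorphic sections in (iv)--(v) by the implicit function theorem, and then reserve the main effort for the periodic global lift in (vi).

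For (i), given $V\in G(k,d)$ I pick a basis $v_1,\dots,v_k$ of $V$, extend to a basis of $\C^d$, and assemble these as the columns of $G\in\GL(d)$; then $V$ is the span of the first $k$ columns of $G$, and two matrices $G,G'$ give the same subspace precisely when $G'=GH$ with $H\in\GL(k,d)$, since $H$ must stabilize the span of the first $k$ standard basis vectors. For (ii), $\GL(k,d)$ is a closed complex Lie subgroup of $\GL(d,\C)$, cut out by the vanishing of the lower-left $(d-k)\times k$ block, so the standard complex homogeneous space theorem endows $\GL(d)/\GL(k,d)$ with a unique complex manifold structure making $p$ a holomorphic submersion, and the left $\GL(d)$-action is automatically holomorphic. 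For (iii), $\MM_k(d)$ is the complement in $\C^{d\times k}$ of the common vanishing of all $k\times k$ minors, hence a complex manifold; the free right $\GL(k,\C)$-action by column operations has orbit space $G(k,d)$, so the same quotient theorem makes $\tilde p$ a holomorphic submersion.

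For (iv) I would choose a vector-space complement $\mathfrak{m}$ of the Lie algebra $\mathfrak{gl}(k,d)$ inside $\mathfrak{gl}(d)$: the map $\mathfrak{m}\ni X\mapsto p(G\exp X)$ is a local biholomorphism at $0$ by the implicit function theorem (its differential is an isomorphism onto $T_{p(G)}G(k,d)$), and its inverse composed with $X\mapsto G\exp X$ produces the section $i_G$. The construction of $i_M$ is analogous, using a complement in $\C^{d\times k}$ to the tangent space of the $\GL(k,\C)$-orbit. Part (v) is then immediate: around any $z_0\in D$, compose $u$ with $i_G$ centered at $u(z_0)$ (respectively $i_M$ centered at a chosen representative) on a sufficiently small neighborhood of $z_0$ to stay in the domain of the section.

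The main obstacle is (vi), where the local holomorphic lifts from (v) must be patched into a single one-periodic holomorphic lift over the strip. My plan is to view $u$ as descending to a holomorphic map $\bar u$ from the annulus $A_\delta=D_\delta/\Z$, which is a non-compact and therefore Stein Riemann surface, to $G(k,d)$, and to pull back the principal bundle $\tilde p:\MM_k(d)\to G(k,d)$ to a holomorphic principal $\GL(k,\C)$-bundle $\bar u^{*}\tilde p$ over $A_\delta$; a holomorphic section of this pullback is exactly a $1$-periodic holomorphic lift of $u$ to $\MM_k(d)$. Since $A_\delta$ retracts onto $S^1$ and $\GL(k,\C)$ is connected, the classifying map factors through $\pi_1(B\GL(k,\C))=\pi_0(\GL(k,\C))=0$, so the bundle is topologically trivial. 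Invoking the Oka--Grauert principle for holomorphic principal bundles with complex Lie structure group over Stein spaces then yields holomorphic triviality, and any global holomorphic section furnishes the desired $\tilde u$ after pullback along $D_\delta\to A_\delta$. If one prefers a more hands-on route, I would instead pick a lift $\tilde u_0:D_\delta\to\MM_k(d)$ on the simply connected strip via (v) and sheet-by-sheet patching, write $\tilde u_0(z+1)=\tilde u_0(z)g(z)$ for a holomorphic cocycle $g:D_\delta\to\GL(k,\C)$, and solve $k(z+1)=g(z)^{-1}k(z)$ for holomorphic $k$; this reduces to Cousin-type vanishing of $H^1$ with $\GL(k,\C)$-coefficients on the Stein annulus, which is the genuine analytic core of the statement.
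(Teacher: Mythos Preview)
Your arguments for parts (i)--(v) are essentially the same as the paper's: both use the complement of $\mathfrak{gl}(k,d)$ in $\mathfrak{gl}(d)$ and the exponential map to build charts and local sections.

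The genuine difference is in (vi). You invoke the Oka--Grauert principle on the Stein annulus $A_\delta$ to trivialize the pulled-back principal $\GL(k,\C)$-bundle, which is correct but brings in substantial machinery. The paper instead finds a \emph{single global affine chart} of $G(k,d)$ containing the entire image of $u$, after which the lift is immediate. Concretely (say for $k=1$), one seeks a vector $v\in\C^d$ with $v^*u(x)\neq 0$ for all $x\in\R/\Z$; then the chart $p_v:\{w:v^*w=1\}\to\P\C^d$ is a biholomorphism onto its image and $\tilde u=p_v^{-1}\circ u$ is automatically one-periodic and holomorphic. The existence of such a $v$ is a dimension count: the bad set $\{(x,w):w^*u(x)=0\}$ is a real $(2d-1)$-manifold mapping into $\C^d\cong\R^{2d}$, hence its image is a proper subset. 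For general $k$ one replaces $v$ by $V\in\MM_k(d)$ with $\det(V^*u(x))\neq 0$, which reduces to the $k=1$ case via the Pl\"ucker embedding.

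What each approach buys: the paper's argument is elementary and self-contained, exploiting the one-dimensionality of the base in an essential way (the dimension count needs the base to have real dimension at most $1$ relative to $\C^d$). Your Oka--Grauert route is heavier but more robust---it would survive, for instance, if the base were a higher-dimensional Stein domain where no single affine chart captures the image. For the purposes of this paper the elementary argument is preferable, but your proof is valid.
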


\begin{proof}
$G(k,d)$ denotes the set of $k$-dimensional subspaces of $\C^d$.
A $k$-dimensional subspace $u\in G(k,d)$ can be represented by a basis, hence by a $d\times k$ matrix
$B(z)$ of full rank $k$ where the $k$ column vectors span $u$.
Hence, we obtain a natural projection 
$\tilde p: \MM_k(d) \to G(k,d)$. 
We also have a natural projection $\hat p:\GL(d)\to \MM_k(d)$ by simply selecting
the first $k$ column vectors. It is clear that $\hat p$ is holomorphic and we find
local holomorphic injections $i$ from small neighborhoods in $\MM_k(d)$ to $\GL(d)$ such that
$\hat p \circ i = \id$.
Therefore, statement (iii) follows from (ii) and the statement about $i_M$ in (iv) follows from
the one about $i_G$ in (iv).

Two matrices $G_1,G_2$ in $\GL(d)$ represent the same element in $G(k,d)$, 
if and only if the first $k$ column vectors span the same space.
This is equivalent to $G_1 = G_2 G$ where $G\in \GL(k,d)$.
In other words, the quotient of left-remainder classes $\GL(d)\,/\,\GL(k,d)$ is equivalent to
$G(k,d)$ and there is a natural, transitive left-action of $\GL(d)$.
We want to make $p$ a holomorphic submersion.
Therefore, consider the exponential chart $P\mapsto G\exp(P)$ around $G\in\GL(d)$.
If $p$ is a submersion, then the kernel of $p'(G)$ must precisely be given by the Lie-algebra 
$\gl(k,d)$ of $\GL(k,d)$. The Killing form $\Tr(A^*B)$ defines a natural metric on $\gl(d)$ and we can consider
the orthogonal complement $\gl(k,d)^\perp$.
Consider the map $p_G(C)=p(G\exp(C))$ for $C\in \gl(k,d)^\perp$.
For small $C$, these maps are injective.
Now, if $p$ is a holomorphic submersion, then $p_G$ must be holomorphic and the derivative at $0$
must have full rank and hence $p_G$ must
be locally invertible, i.e. $p_G$ must define a chart for small $C$. On the other hand, 
using small $C$, the maps $p_G$ for $G\in\GL(d)$ clearly define
an atlas giving $G(k,d)$ a holomorphic structure, such that $p$ is a holomorphic submersion.
Moreover, the left action of
$\GL(d)$ is also clearly holomorphic. This proves (ii).

For (iv) note that using the charts $p_G$, the maps $i_G$ defined
by $i_G(p_G(C))=G\exp(C)$ fulfill the requirement. Clearly, (v) follows from (iv).

To obtain (vi) let us consider first the case $k=1$ for simplicity. Then $G(1,d)=\P\C^d$ and
$\MM_1(d)=\C^d\setminus\{0\}$. 
It is enough to find $v\in\C^d$ such that $v$ is never orthogonal to $u(x)$, i.e.
$v^* u(x) \neq 0$ for all $x\in[0,1]$, because the canonical projection
$p_v: \{w\in\C^d: v^*w=1\} \to \P\C^d$ defines a chart and the inverse gives the desired
$1$-periodic lift $\tilde u=p_v^{-1} \circ u$.

So let $W(x)=\{w\in\C^d\,|\,w^*u(x)=0\}$, then $W(x)\cong\C^{d-1}\cong\R^{2d-2}$ defines a real, $2d-2$ dimensional 
fiber bundle over the torus $\R/\Z$ and $\MM=\bigcup_{x\in\R / \Z} \{x\}\times W(x)$ can be seen as a
real $2d-1$ dimensional submanifold of $(\R / \Z)\times \C^d$.
The map $f: \MM\to\C^d, f(x,w)=w$ is differentiable.
As $\C^d\cong\R^{2d}$ is real $2d$ dimensional, $f$ is not surjective. Take $v$ not in the image
of $f$.

For general $k$ one needs to find $V\in \MM_k(d)$ such that 
$\det(V^*u(x))\neq 0$\footnote{
the condition is independent of the representative of $u(x)$ in $\MM_k(d)$)} 
for all $x\in[0,1]$. 
Then the projection $p_V: \{W\in\MM_k(d):V^* W=1\}\to G(k,d)$ is a chart and
$\tilde u=p_V^{-1}\circ u$ will be the desired one-periodic lift. The existence of
$V$ can be obtained by similar arguments.\footnote{Associating $V$ and $u(x)$ 
with the exterior products of their column vectors
this is equivalent to the case $k=1$.}
\end{proof}

\end{document}